\newtheorem{theorem}{Theorem}
\newtheorem{lemma}{Lemma}
\newtheorem{definition}{Definition}
\newtheorem{corollary}{Corollary}
\newtheorem{remark}{Remark}
\begin{document}
\author{Csaba Vincze, Márk Oláh and Let\'{i}cia Lengyel}
\footnotetext[1]{{\bf Keywords:} Euclidean geometry; Convex geometry, Equidistant sets}
\footnotetext[2]{{\bf MR subject classification:} 51M04}
\footnotetext[3]{Cs. Vincze is supported by the EFOP-3.6.2-16-2017-00015 project. The project has been supported by the European Union, co-financed by the European Social Fund. Let\'{i}cia Lengyel is supported by the University of Debrecen (Summer Grant 2019).}
\address{Cs. Vincze\\
Institute of Mathematics, University of Debrecen\\ P.O.Box 400, H-4002 Debrecen, Hungary}
\email{csvincze@science.unideb.hu}
\address{M. Ol\'{a}h\\
Institute of Mathematics, University of Debrecen,  Doctoral School of Mathematical and Computational Sciences
\\ P.O.Box 400, H-4002 Debrecen, Hungary}
\email{olma4000@gmail.com}
\address{Let\'{i}cia Lengyel\\
BSC Mathematics, University of Debrecen, \\ P.O.Box 400, H-4002 Debrecen, Hungary}
\email{letikee123@gmail.com}
\title[On equidistant polytopes ...] {On equidistant polytopes in the Euclidean space}

\begin{abstract}
An equidistant polytope is a special equidistant set in the space $\mathbb{R}^n$ all of whose boundary points have equal distances from two finite systems of points. Since one of the finite systems of the given points is required to be in the interior of the convex hull of the other one we can speak about inner and outer focal points of the equidistant polytope. It is of type $(q, p)$, where $q$ is the number of the outer focal points and $p$ is the number of the inner focal points. The equidistancy is the generalization of convexity because a convex polytope can be given as an equidistant polytope of type $(q, 1)$, where $q\geq n+1$. In the paper we present some general results about the basic properties of the equidistant polytopes: convex components, graph representations, connectedness, correspondence to the Voronoi decomposition of the space etc. Especially, we are interested in equidistant polytopes of dimension $2$ (equidistant polygons). Equidistant polygons of type $(3,2)$ will be characterized in terms of a constructive (ruler-and-compass) process to recognize them. In general they are pentagons with exactly two concave angles such that the vertices, where the concave angles appear at, are joined by an inner diagonal related to the adjacent sides of the polygon in a special way via the three reflection theorem for concurrent lines. The last section is devoted to some special arrangements of the focal points to get the concave quadrangles as equidistant polygons of type $(3,2)$.
\end{abstract}

\maketitle
\section{Introduction}

Let $K\subset \mathbb{R}^n$ be a subset in the Euclidean coordinate space. The distance between a point $X\in \mathbb{R}^n$ and $K$ is measured by the usual infimum formula
$$d(X, K) := \inf  \{ d(X,Y) \ | \ Y\in K \}.$$
The equidistant set of $K$ and $L\subset \mathbb{R}^n$ is defined as 
$$\{K=L\}:=\{X\in \mathbb{R}^n\ | \ d(X,K)=d(X,L)\}.$$
Since the classical conics can also be given in this way \cite{PS}, the equidistant sets are their ge\-ne\-ralizations: $K$ and $L$ are called the focal sets. Moreover, any convex polytope can be given as an equdistant set with finitely many focal points in $K$ and $L$, respectively \cite{plane}, see also \cite{space}. Therefore the equidistancy is the generalization of convexity. In a similar way we can speak about equidistant functions \cite{equidfunc} by requiring its epigraph to be an equidistant body:
$$\{K\leq L\}:=\{X\in \mathbb{R}^n\ | \ d(X,K)\leq d(X,L)\}.$$
In case of singletons we are going to use the applicable shortcut notations 
$$\{X\leq L\}:=\{\{X\}\leq L\}, \ \{X\leq Y\}:=\{\{X\}\leq \{Y\}\} \ \textrm{etc.} \ \ (X, Y\in \mathbb{R}^n).$$
The investigation of equidistant sets with finitely many focal points is motivated by a continuity theorem \cite{PS}: If $K$ and $L$ are disjoint compact subsets in the space, $K_n\to K$ and $L_n\to L$ are convergent sequences of non-empty compact subsets with respect to the Hausdorff metric, then $\{K_n=L_n\}$ is a convergent sequence tending to $\{K=L\}$ with respect to the Hausdorff metric in any bounded region of the space. For the Hausdorff distance between compact sets in $\mathbb{R}^n$ see e.g. \cite{Lay}. 

The points of an equidistant set are difficult to determine in general because there are no simple formulas to compute the distance between a point and a set. The continuity theorem allows us to simplify the general problem by using the approximation $\{K_n=L_n\} \approx \{K=L\}$, where $K_n\subset K$ and $L_n\subset L$ are finite subsets. In case of finite focal sets in 2D, the equidistant points can be characterized in terms of computable constants and parametrization \cite{planeequid} (the paper contains a MAPLE implementation as well, as an alternative of the error estimation process for quasi-equidistant points suggested by \cite{PS} for the computer simulation). For some general investigations of equidistant sets in metric spaces we can refer to Loveland's and Wilker's fundamental works  \cite{Loveland} and \cite{Wilker}. 

\section{Convex components, graph representations and connectedness}

\begin{lemma}
\label{lem:00}
Let $K$ and $L$ be non-empty compact subsets in the Euclidean coordinate space $\mathbb{R}^n$. The equidistant body
$$\{K\leq L\}:=\{X\in \mathbb{R}^n\ | \ d(X,K)\leq d(X,L)\}$$
can be expressed as the union
$$\{K\leq L\}=\bigcup_{X\in K} \{ X \leq L\}.$$
Especially
$$\{K_1 \cup K_2 \leq L\}=\{K_1\leq L\} \cup \{K_2 \leq L\}.$$
\end{lemma}

\begin{proof}
If $Z\in \{K\leq L\}$ then $d(Z, K)\leq d(Z, L)$ and, by the compactness (especially, the closedness) there exists a point $X\in K$, where the minimal distance is attained at:
$$d(Z,X)=d(Z, K)\leq d(Z,L) \ \Rightarrow \ Z\in \{ X \leq L\}.$$
Conversely, for any $X\in K$, $d(Z, K)\leq d(Z, X),$ i.e. if $Z\in \{ X \leq L\}$, then 
$$d(Z, K)\leq d(Z, X)\leq d(Z,L)$$
and $Z\in \{K\leq L\}$ as was to be proved. 
\end{proof}

The previous result motivates us to formulate some simple observations about the equidistant bodies of the form $\{ X \leq L\}$.

\begin{lemma}
\label{lem:03}
For any $X\in \mathbb{R}^n$ the set $\{X\leq L\}$ is convex. If $X$ is not an accumulation point of $L$ then $\{X\leq L\}$ is of dimension $n$ . 
\end{lemma}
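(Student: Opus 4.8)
The plan is to split the statement into its two assertions and handle each separately, since the convexity is a purely ``intersection of half-spaces'' phenomenon while the dimension requires a local argument near $X$.

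For the convexity I would first rewrite the defining condition. Since $d(Z,L)=\inf_{Y\in L}d(Z,Y)$, a single real number $d(Z,X)$ is $\leq$ this infimum exactly when it is a lower bound of the family, so $d(Z,X)\leq d(Z,L)$ holds precisely when $d(Z,X)\leq d(Z,Y)$ for \emph{every} $Y\in L$. This yields the representation
$$\{X\leq L\}=\bigcap_{Y\in L}\{X\leq Y\},$$
which reduces everything to understanding the two-point sets $\{X\leq Y\}$. Squaring the inequality $d(Z,X)\leq d(Z,Y)$ (legitimate as both sides are non-negative) and cancelling the common term $\norm{Z}^2$ turns it into a linear inequality in $Z$, so $\{X\leq Y\}$ is the closed half-space bounded by the perpendicular bisector hyperplane of the segment $XY$ and containing $X$ (and it is all of $\mathbb{R}^n$ when $Y=X$). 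Each $\{X\leq Y\}$ is therefore convex, and an arbitrary intersection of convex sets is convex, so $\{X\leq L\}$ is convex. In passing this also shows it is closed.

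For the dimension claim I would show that $\{X\leq L\}$ has non-empty interior, which for a convex subset of $\mathbb{R}^n$ is equivalent to being $n$-dimensional. Because $X$ is not an accumulation point of $L$, there is an open ball $B(X,r)$ with $B(X,r)\cap L\subseteq\{X\}$, so every $Y\in L$ with $Y\neq X$ satisfies $d(X,Y)\geq r$. Then for any $Z$ with $d(Z,X)<r/2$ and any such $Y$, the triangle inequality gives $d(Z,Y)\geq d(X,Y)-d(X,Z)>r-r/2>d(Z,X)$, while trivially $d(Z,X)\leq d(Z,X)$; hence $d(Z,X)\leq d(Z,Y)$ for all $Y\in L$, i.e. $Z\in\{X\leq L\}$. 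Thus the whole ball $B(X,r/2)$ is contained in $\{X\leq L\}$, giving the full dimension.

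The one point I anticipate needing care is the case $X\in L$: there $d(Z,L)\leq d(Z,X)$ for all $Z$, so one might worry the set collapses to the equidistant set $\{X=L\}$ and loses a dimension. The isolation of $X$ as a point of $L$ is exactly what rescues the argument, since it guarantees $X$ still realizes the distance to $L$ throughout $B(X,r/2)$, and the estimate above goes through unchanged. That the hypothesis cannot be dropped is visible from taking $X$ to be an interior point of a segment $L$, where $\{X\leq L\}$ degenerates to a lower-dimensional slice; this degenerate example is the natural sanity check I would keep in mind while writing the clean argument.
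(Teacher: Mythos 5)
Your proof is correct and takes essentially the same route as the paper: convexity via the halfspace intersection formula $\{X\leq L\}=\bigcap_{Y\in L}\{X\leq Y\}$, and full dimension by exhibiting $X$ as an interior point of $\{X\leq L\}$. The only difference is one of detail: where the paper splits into the cases of an outer point (settled by an unspecified ``continuity argument'') and an isolated point of $L$, your single triangle-inequality estimate on the ball $B(X,r/2)$ handles both cases uniformly and is in fact more explicit than the paper's argument.
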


\begin{proof} The convexity follows from the halfspace intersection formula
\begin{equation}
\label{halfspaceinter}
\{ X \leq L\}=\bigcap_{Y\in L} \{ X \leq Y \},
\end{equation}
where $\{ X \leq Y\}$ is the closed halfspace bounded by the perpendicular bisector of the segment $XY$  containing $X$. On the other hand, if $X$ is not an accumulation point of $L$, then it is an outer point or an isolated point. In case of an outer point $d(X,L)>0$ and a continuity argument shows that $X$ is an interior point of $\{X\leq L\}$. Otherwise (in case of an isolated point of $L$) 
$$d(X,Z)=d(Z,L)$$
for any element $Z$ in a sufficiently small open neighbourhood of $X$. Therefore $X$ is an interior point of $\{X\leq L\}$.
\end{proof}

\begin{remark} {\emph{Note that the converse does not hold in general: if $X=(0,0)$, $L=\{(0, 1/n)\ | \ n\in \mathbb{N}\}$ then 
$$\{X\leq L\} =\{(x, y)\ | \ x\leq 0\}$$
is of dimension $2$ but $X$ is an accumulation point of $L$. It can be easily seen that 
\begin{itemize}
\item if $X$ is an inner point of $L$ then $\{X\leq L\}=\{X\}$,
\item if $X$ is an outer point or an isolated point of $L$ then $X$ is an inner point of $\{X\leq L\}$, 
\item if $X$ is an accumulation point of $L$ then $-X+\{X\leq L\}$ is a subset in the regular normal cone \cite{RW} of the topological closure of $L$ at the point $X$ containing proximal normals of the form $Z-X$ $(Z\in \{X\leq L\})$.
\end{itemize}}} 
\end{remark} 

By Lemma \ref{lem:00} any equidistant body can be expressed as the union of convex subsets determined by its focal points. Following the steps in the proof we also have that  
$$\{K < L\}:=\{X\in \mathbb{R}^n\ | \ d(X,K)<d(X,L)\}$$
can be expressed as the union
$$\{K < L\}=\bigcup_{X\in K} \{ X < L\}$$
and, consequently,
$$\{K\leq L\}=\bigcup_{X\in K} \{ X \leq L\}=\bigcup_{X\in K} \overline{\{ L < X \}}=$$
$$\overline{\bigcap_{X\in K} \{ L < X \}}=\overline{\bigcap_{X\in K} \bigcup_{Y\in L}\{ Y < X \}}=\bigcup_{X\in K} \bigcap_{Y\in L}\{ X \leq Y \}.$$
On the other hand
$$\overline{\{K\leq L\}}=\{L< K\}=\bigcup_{Y\in L} \{ Y < K\}=\bigcup_{Y\in L} \overline{\{ K \leq Y\}}=\overline{\bigcap_{Y\in L} \{ K\leq Y\}} \ \Rightarrow \ $$
$$\{K\leq L\}=\bigcap_{Y\in L} \{ K\leq Y\}=\bigcap_{Y\in L} \bigcup_{X\in K}\{ X\leq Y\}.$$
Especially,
\begin{equation}
\label{intersection}
\{K\leq L_1 \cup L_2\}=\{K\leq L_1\} \cap \{K \leq L_2\}.
\end{equation}
\begin{lemma}
\label{lem:01}
Let $K$ and $L$ be non-empty compact subsets in the Euclidean coordinate space $\mathbb{R}^n$. The equidistant body
$$\{K\leq L\}:=\{X\in \mathbb{R}^n\ | \ d(X,K)\leq d(X,L)\}$$
is bounded if and only if $K$ is in the interior of the convex hull of $L$.
\end{lemma}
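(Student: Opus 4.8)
The plan is to prove the two implications separately, using throughout the two descriptions already available: the union formula $\{K\le L\}=\bigcup_{X\in K}\{X\le L\}$ from Lemma~\ref{lem:00}, and the halfspace–intersection formula (\ref{halfspaceinter}), which says that $Z\in\{X\le L\}$ exactly when $|Z-X|\le|Z-Y|$ for every $Y\in L$. The key analytic device in both directions is to write a candidate point as $Z=X+tu$ with $X\in K$, $u$ a unit vector and $t\ge 0$, and to compare $d(Z,K)\le t=|Z-X|$ against the distance $d(Z,L)$, which is controlled by the sign of the inner products $\langle u,\,Y-X\rangle$.

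For the ``only if'' part I would argue by contraposition: assume $K$ is not contained in the interior of $\operatorname{conv}(L)$ and produce an unbounded ray inside $\{K\le L\}$. Pick $X_0\in K$ with $X_0\notin\operatorname{int}\operatorname{conv}(L)$. Since $\operatorname{conv}(L)$ is a closed convex set not containing $X_0$ in its interior, the supporting/separating hyperplane theorem yields a unit vector $u$ with $\langle u,\,Y-X_0\rangle\le 0$ for all $Y\in L$. Along the ray $Z=X_0+tu$, $t\ge 0$, one has $d(Z,K)\le|Z-X_0|=t$, while the expansion $|Z-Y|^2=|X_0-Y|^2-2t\langle u,\,Y-X_0\rangle+t^2\ge t^2$ gives $d(Z,L)\ge t$ for every such $Z$. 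Hence $d(Z,K)\le t\le d(Z,L)$, so the whole ray lies in $\{K\le L\}$ and the set is unbounded.

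The ``if'' part is the substantial one, and its difficulty is uniformity in $X$. Assuming $K\subset\operatorname{int}\operatorname{conv}(L)$, compactness of $K$ inside the open set $\operatorname{int}\operatorname{conv}(L)$ gives a single $\delta>0$ with $\overline{B}(X,\delta)\subset\operatorname{conv}(L)$ for every $X\in K$. Fix $X\in K$ and a unit vector $u$. Then $X+\delta u\in\operatorname{conv}(L)$, so by Carath\'eodory's theorem it is a convex combination $\sum_i\lambda_i Y_i$ of points $Y_i\in L$; since $\sum_i\lambda_i\langle u,\,Y_i-X\rangle=\langle u,\,\delta u\rangle=\delta$, at least one focal point $Y\in L$ satisfies $\langle u,\,Y-X\rangle\ge\delta$. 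For any $Z=X+tu\in\{X\le L\}$ the constraint $|Z-X|\le|Z-Y|$ for this particular $Y$ reads $t^2\le|X-Y|^2-2t\langle u,\,Y-X\rangle+t^2$, whence $t\le|X-Y|^2/(2\delta)\le D^2/(2\delta)$, where $D:=\operatorname{diam}(K\cup L)<\infty$. Thus $\{X\le L\}\subseteq\overline{B}(X,\, D^2/(2\delta))$ with a radius independent of $X$, and by Lemma~\ref{lem:00} the set $\{K\le L\}=\bigcup_{X\in K}\{X\le L\}$ is contained in the $D^2/(2\delta)$-neighbourhood of the bounded set $K$, hence is bounded.

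The main obstacle, as indicated, is exactly this last uniformity: for a fixed inner point $X$ the polytope $\{X\le L\}$ is bounded precisely because the normals $Y-X$ positively span $\mathbb{R}^n$, but its diameter blows up as $X$ approaches $\partial\operatorname{conv}(L)$. The argument defeats this blow-up by extracting a uniform positive margin $\delta$ from the compactness of $K$ and a finite diameter $D$ from the compactness of $L$; together they bound the radius of every slice $\{X\le L\}$ simultaneously.
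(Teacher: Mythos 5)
Your proof is correct, and it splits into one direction that matches the paper and one that is genuinely different. For unboundedness (the ``only if'' part), your argument is essentially the paper's: the paper projects $X_1$ onto $\operatorname{conv} L$, takes the supporting hyperplane at the closest point, and shows the ray from $X_1$ pointing away from $\operatorname{conv} L$ stays in $\{K\leq L\}$; you obtain the same ray directly from a separating/supporting hyperplane and verify $d(Z,L)\geq t\geq d(Z,K)$ algebraically rather than via the closest-point projection. For boundedness (the ``if'' part), the routes diverge. The paper squares the defining inequalities to get $\langle Q, Y-X\rangle \leq c$ for all $Y\in L$ with $c$ independent of $Q$, interprets this as $Q/c$ lying in the polar body $\left(-X+\operatorname{conv} L\right)^*$, and then uses the inclusion-reversing property of polarity together with a ball of radius $r_X$ inside $-X+\operatorname{conv} L$ to conclude that $Q$ lies in a ball of radius $c/r_X$, with uniformity in $X$ supplied by a compactness argument for the inradius. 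You avoid polar duality entirely: the same compactness fact gives the uniform margin $\delta$, the averaging (Carath\'eodory) observation produces, for each direction $u$, a focal point $Y\in L$ with $\langle u, Y-X\rangle\geq\delta$, and a single squared-distance inequality then yields the explicit bound $|Z-X|\leq \operatorname{diam}(K\cup L)^2/(2\delta)$ for every $Z\in\{X\leq L\}$, uniformly in $X\in K$. Your version is more elementary and quantitative (an explicit radius, no polar-body machinery); the paper's is more compressed once polarity and the identity $(rD)^*=D/r$ for the unit ball $D$ are taken as known. Both arguments rest on exactly the same two inputs --- a uniform inradius margin from the compactness of $K$ inside $\operatorname{int}\operatorname{conv} L$, and the boundedness of $K\cup L$ --- so your proof is a legitimate, self-contained alternative rather than a paraphrase.
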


\begin{proof}  Suppose that the equidistant body is bounded and $X_1 \in K$ such that $X_1$ is not in the interior of the convex hull of $L$. Consider the closest point $Y_1$ of $\textrm{conv\ } L$ to $X_1$. It is uniquely determined. Taking the supporting hyperplane $H_1$ to $\textrm{conv\ } L$ at $Y_1$ such that 
\begin{itemize}
\item it is perpendicular to the segment $X_1 Y_1$ in case of $X_1\neq Y_1$ (see Figure 1), 
\item it is an arbitrary supporting hyperplane in case of $X_1=Y_1$,
\end{itemize}
the halfspace containing the convex hull of $L$ is called positive. Its complement is called negative. Choosing a point $Q$ of the ray emanating from $X_1$ in the negative open half space along the orthogonal direction to $H_1$, it can be easily seen that $Y_1$, $X_1$ and $Q$ is collinear, i.e.
$$d(Q,K)\leq d(Q, X_1) \leq d(Q, Y_1)=\inf_{Y\in \ \textrm{conv\ } L} d(Q, Y)\leq \inf_{Y\in L} d(Q, Y)=d(Q, L)$$
because of $L\subset \ \textrm{conv\ } L$. Therefore $Q\in \{K\leq L\}$ but $Q$ can tend to the infinity. It is a contradiction. 
\begin{figure}
\centering
\includegraphics[scale=0.25]{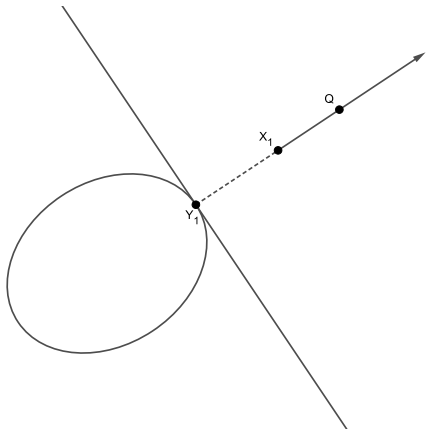}
\caption{The proof of Lemma \ref{lem:01}.}
\end{figure}
To prove the converse statement suppose that $K$ is contained in the interior of the convex hull of $L$. By Lemma \ref{lem:00}, for any $\displaystyle{Q\in \{K\leq L\}}$ we have that $Q\in \{X\leq L\}$ for some point $X$ in $K$. This means that
$$d(Q,X)\leq d(Q, Y) \ \ (Y\in L).$$
Taking the square of both sides we have that
$$\langle Q, Y-X \rangle \leq \frac{1}{2} |Y|^2-\frac{1}{2}|X|^2 \leq c,$$
where the upper bound $c>0$ can be choosen independently of $Q$ due to the boundedness of $K$ and $L$. Therefore $Q/c$ is an element in the polar body $\displaystyle{\left(-X+\textrm{conv\ }L\right)^*}$. The translated set $\displaystyle{-X+\textrm{conv\ }L}$ contains the origin in its interior because $X\in K$ and $K$ is in the interior of the convex hull of $L$. Taking a sufficiently small radius $r_X>0$ we have that 
$$r_X D \subset -X+\textrm{conv\ }L \ \Rightarrow \ Q/c \in \left(-X+\textrm{conv\ }L\right)^* \subset \left(r_X D \right)^*=D/r_X \ \Rightarrow \ Q \in c D/r_X,$$
where $D$ is the closed unit ball around the origin in the space. Using a compactness argument, it follows that
$$r:=\inf \sup_{X\in K} \{ r_X \ | \ r_X D\subset -X+\textrm{conv\ } L\}=\inf \sup_{X\in K} \{ r_X \ | \ X+r_X D\subset \textrm{conv\ } L\}>0,$$
i.e. $\displaystyle{Q \in c D/r}$ for any $Q\in \{K\leq L\}$. 
\end{proof}

\subsection{The graph representation of equidistant bodies with finitely many focal points}

In what follows some basic properties (connectedness) of an equidistant body will be investigated by the pairwise comparison of the convex components in the union
$$\{K\leq L\}=\bigcup_{i=1}^p \{X_i \leq L\},$$
where $K=\{X_1, \ldots, X_p\}$, $L=\{Y_1, \ldots, Y_q\}$ are finite sets. It is motivated by the difficulties of the description of the geometric relationships among the focal points. The pairwise comparison of simple configurations   seems to be a more effective way according to the algorithmic methods as well (see e.g. the finite version of Helly's theorem). 

\begin{definition}
The vertices of the graph representation $G$ of the equidistant body $\{K\leq L\}$ are the elements of $K$ and there is an edge between $X_i$ and $X_j$ $(i\neq j)$ if and only if
$$\{X_i \leq L\}\cap \{X_j \leq L\}\neq \emptyset.$$
The weight of the edge $X_iX_j$ is
$$w_{ij}=\dim \{X_i \leq L\}\cap \{X_j \leq L\}.$$
\end{definition}

Since
$$\{X_i \leq L\}\stackrel{(\ref{intersection})}{=}\bigcap_{k=1}^q \{ X_i \leq Y_k \},$$
the existence of the edge between $X_i$ and $X_j$ can be checked algorithmically by the finite version of Helly's  theorem. 
 
\begin{theorem}
\label{connectedness} The equidistant body $\{K\leq L\}$ is connected if and only if its graph representation is connected.
\end{theorem}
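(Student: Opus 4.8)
The plan is to treat this as an instance of the general principle that a finite union of connected sets is connected precisely when its intersection graph is connected, and then to exploit two structural features of the pieces $A_i := \{X_i \leq L\}$: each is convex (hence connected) by Lemma \ref{lem:03}, and each is closed, being by (\ref{intersection}) a finite intersection $\bigcap_{k=1}^q \{X_i \leq Y_k\}$ of closed halfspaces. I would also record at the outset that every $A_i$ is nonempty, since $d(X_i,X_i)=0\leq d(X_i,L)$ gives $X_i\in A_i$; this keeps the pieces genuinely connected and lets the union fragments below be nonempty.

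For the forward direction I would argue contrapositively. Suppose $G$ is disconnected, so the index set $\{1,\ldots,p\}$ splits into two nonempty classes $S_1, S_2$ with no edge between them; by the definition of $G$ this means $A_i \cap A_j = \emptyset$ whenever $i\in S_1$ and $j\in S_2$. Setting $U=\bigcup_{i\in S_1} A_i$ and $V=\bigcup_{j\in S_2} A_j$, these are nonempty, disjoint, and their union is $\{K\leq L\}$. Since each is a finite union of closed sets it is itself closed, so $(U,V)$ realizes $\{K\leq L\}$ as a disjoint union of two nonempty relatively closed subsets, whence the body is disconnected.

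For the reverse direction, assume $G$ is connected. I would build the union one piece at a time along a traversal (for instance breadth-first) of $G$, producing an ordering $A_{i_1}, A_{i_2}, \ldots, A_{i_p}$ in which each $A_{i_k}$ with $k\geq 2$ is adjacent in $G$ to some earlier $A_{i_l}$, and therefore meets $\bigcup_{l<k} A_{i_l}$. An induction on $k$, using the elementary fact that the union of two connected sets sharing a point is connected, then shows that every partial union $\bigcup_{l\leq k} A_{i_l}$ is connected; taking $k=p$ yields the connectedness of $\{K\leq L\}$.

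The substantive point, and the only place where finiteness is genuinely used, is the forward direction: merely knowing that $U$ and $V$ are disjoint would not force a disconnection, but the closedness of the finitely many convex pieces promotes disjointness into a bona fide topological separation. The reverse direction is the routine chain argument for unions of connected sets and presents no real obstacle.
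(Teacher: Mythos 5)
Your proof is correct, and the forward direction coincides with the paper's: both pass to the contrapositive and use the closedness of the convex components $\{X_i\leq L\}$ to turn a partition of the vertex set with no crossing edges into a separation of the body by two disjoint nonempty closed sets. (Your explicit observation that $X_i\in\{X_i\leq L\}$, so the pieces and hence the two union fragments are nonempty, is a detail the paper leaves implicit.) The genuine difference is in the reverse direction. Where you run a breadth-first induction and invoke the abstract fact that two connected sets with a common point have connected union, the paper constructs an explicit polygonal path from $P\in\{X_i\leq L\}$ to $Q\in\{X_j\leq L\}$: it walks along a graph path $X_i - X_{k_1}-\cdots - X_{k_m}-X_j$, joining $P$ to $X_i$, then $X_i$ to a point $X_{ik_1}$ of the intersection $\{X_i\leq L\}\cap\{X_{k_1}\leq L\}$, then $X_{ik_1}$ to $X_{k_1}$, and so on, each segment lying in a single convex component. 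This uses convexity (not merely connectedness) of the pieces and proves the stronger conclusion that the body is \emph{arcwise} connected, which the paper immediately exploits: the corollary following the theorem (equivalence of connectedness and arcwise connectedness for equidistant bodies) and the proof of the weighted version for the interior both reuse this explicit chain construction. Your argument is more general --- it would apply to any finite family of closed connected pieces --- but it only delivers connectedness, so to obtain those subsequent results you would still have to upgrade it to the path-building version.
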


\begin{proof}
Suppose that $G=A\cup B$, where $A$ and $B$ are disjoint subsets of the vertices such that there is no edge with endpoints in $A$ and $B$, respectively. Taking the sets
$$M=\bigcup_{X_i\in A} \{X_i \leq L\} \ \textrm{and}\ N=\bigcup_{X_i\in B} \{X_i \leq L\}$$
we have that $M$ and $N$ are closed disjoint subsets of the equidistant body and $\{K\leq L\}=M \cup N$ because $G=A\cup B$. If the equidistant body is connected then one of the sets, say $M$ must be empty. So is $A$, i.e. the graph representation is connected. Conversely, if $G$ is connected then we have a sequence of edges from $X_i$ to $X_j$ for any pair of indices $i\neq j$: $X_i - X_{k_1} - \ldots - X_{k_m} - X_j$. The continuous path connecting $P\in \{X_i \leq L\}$ with $Q\in \{X_j\leq L\}$ can be constructed as follows: the first step is to join $P$ with $X_i$ (they are in the same convex component), the second step is to join $X_i$ with a point
\begin{equation}
\label{chain}
X_{ik_1}\in \{X_i \leq L\}\cap \{X_{k_1} \leq L\}
\end{equation}
and, finally, we can join $X_{i k_1}$ with $X_{k_1}$ because they are in the same convex component. The polygonal chain constructed by the succesive application of these steps joins $P$ with $Q$. Therefore the body is arcwise connected, i.e. it is connected. 
\end{proof}

According to the argument in the proof of the previous theorem, the connectedness and the arcwise connectedness are equivalent for an equidistant body. 

\begin{corollary}
The equidistant body $\{K\leq L\}$ is arcwise connected if and only if its graph representation is connected.
\end{corollary}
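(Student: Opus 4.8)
The plan is to close the logical loop among arcwise connectedness, connectedness, and connectedness of the graph representation, leaning entirely on Theorem \ref{connectedness} together with the \emph{construction} carried out in its proof. The crucial observation is that the ``if'' direction of Theorem \ref{connectedness} was not established abstractly: it produced an explicit continuous (polygonal) path between arbitrary points. Thus the theorem already contains, essentially verbatim, the stronger conclusion about arcwise connectedness; the corollary is a matter of reorganizing what is there.

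First I would record the elementary topological fact that arcwise connectedness implies connectedness for any subset of $\mathbb{R}^n$. Combined with Theorem \ref{connectedness}, this disposes of one implication at once: if $\{K\leq L\}$ is arcwise connected, then it is connected, and hence its graph representation $G$ is connected. For the converse I would appeal directly to the ``if'' part of the proof of Theorem \ref{connectedness}. There, assuming $G$ connected, one fixes $P\in \{X_i\leq L\}$ and $Q\in \{X_j\leq L\}$, selects a path $X_i - X_{k_1} - \cdots - X_{k_m} - X_j$ in $G$, and concatenates segments: $P$ to $X_i$, then $X_i$ to a junction point $X_{ik_1}\in \{X_i\leq L\}\cap \{X_{k_1}\leq L\}$ as in $(\ref{chain})$, then $X_{ik_1}$ to $X_{k_1}$, and so on. Each segment lies inside a single convex component $\{X_\bullet\leq L\}$, which is convex by Lemma \ref{lem:03}, hence inside the body $\{K\leq L\}$. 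The resulting polygonal chain is a continuous arc from $P$ to $Q$, so connectedness of $G$ yields arcwise connectedness.

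The only point requiring care, and the one I would flag as the (mild) obstacle, is to confirm that a junction point $X_{ik_1}$ is genuinely available at every step of the chain. This is guaranteed precisely by the definition of the graph representation: an edge of $G$ between two vertices encodes a \emph{nonempty} intersection of the corresponding convex components, which is exactly where the junction point is drawn from. Since each segment of the chain stays within one component, the convexity supplied by Lemma \ref{lem:03} keeps the entire arc inside $\{K\leq L\}$. Combining the two implications gives the stated equivalence, and I would note that the argument shows a little more—namely that connectedness and arcwise connectedness coincide for equidistant bodies—which is the reason the corollary is an immediate companion to Theorem \ref{connectedness}.
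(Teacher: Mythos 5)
Your proof is correct and follows essentially the same route as the paper: arcwise connectedness trivially gives connectedness (hence a connected graph by Theorem \ref{connectedness}), while the converse reuses the explicit polygonal-chain construction from the ``if'' part of that theorem's proof, whose segments stay inside the convex components. Your added remark that the junction points exist precisely because edges of $G$ encode nonempty intersections is a fair, if minor, elaboration of what the paper leaves implicit.
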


\begin{proof}
If the  equidistant body is arcwise connected then it is connected. So is its graph representation. Conversely, if $G$ is connected then we can follow the process in the proof of the previous theorem to construct a polygonal chain between any pair of points $P$ and $Q$ in $\{K\leq L\}$. 
\end{proof}

\begin{corollary} 
\label{discon} 
A disconnected equidistant body $\{K\leq L\}$ is the disjoint union of equidistant bodies of type $\{K_i\leq L\}$, where $K=K_1 \cup K_2 \cup \ldots$ is the disjoint union of subsets in $K$ corresponding to the connected components of the graph representation. 
\end{corollary}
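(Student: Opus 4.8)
The plan is to read the decomposition straight off the graph representation $G$ together with Lemma \ref{lem:00}, and then to reduce the disjointness claim to the very definition of the edges of $G$. First I would let $G_1, G_2, \ldots$ denote the connected components of $G$ and let $K_i$ be the set of focal points serving as the vertices of $G_i$. Since every vertex of a graph belongs to exactly one connected component, $K=K_1 \cup K_2 \cup \ldots$ is a genuine disjoint union of subsets of $K$. Applying Lemma \ref{lem:00} (first to $K$, then to each $K_i$) yields
$$\{K\leq L\}=\bigcup_{X\in K} \{X \leq L\}=\bigcup_i \bigcup_{X\in K_i} \{X \leq L\}=\bigcup_i \{K_i \leq L\},$$
so the only thing left is to verify that the bodies $\{K_i\leq L\}$ are pairwise disjoint.

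The key step is this disjointness, and here I would argue as follows. Fix indices $i\neq j$ and take arbitrary focal points $X\in K_i$ and $X'\in K_j$. Because $X$ and $X'$ lie in distinct connected components of $G$, there can be no edge joining them, as an edge would place them in the same component. By the definition of the graph representation, the absence of an edge between $X$ and $X'$ means precisely that $\{X \leq L\}\cap \{X' \leq L\}=\emptyset$. Expanding both bodies with Lemma \ref{lem:00} and distributing the intersection over the unions then gives
$$\{K_i\leq L\}\cap \{K_j\leq L\}=\bigcup_{X\in K_i}\bigcup_{X'\in K_j}\left(\{X \leq L\}\cap \{X' \leq L\}\right)=\emptyset,$$
which establishes the disjoint union decomposition asserted in the statement.

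For completeness I would add that each piece $\{K_i\leq L\}$ is itself connected, so that the sets $K_i$ really do correspond to the connected components of the body and not merely to an arbitrary partition. Indeed, the graph representation of $\{K_i\leq L\}$ has vertex set $K_i$ with an edge between two of its vertices exactly when the corresponding convex components meet; this is nothing but the induced subgraph of $G$ on $K_i$, namely the connected component $G_i$. Since $G_i$ is connected, Theorem \ref{connectedness} applies and shows that $\{K_i\leq L\}$ is connected.

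I do not expect a genuine obstacle here: the whole argument rests on the bookkeeping between edges of $G$ and the non-empty intersections of the convex components, and the only point that requires a little care is the observation that vertices lying in different components of a graph are never adjacent. It is exactly this fact that upgrades the pieces from being merely distinct to being disjoint, and hence it is the step I would state most explicitly.
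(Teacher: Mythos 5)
Your proof is correct and takes essentially the same route as the paper, which states Corollary \ref{discon} without a separate proof because it is immediate from the machinery of Theorem \ref{connectedness}: there too the decomposition comes from Lemma \ref{lem:00}, and disjointness comes from the absence of edges between the parts of a vertex partition (exactly your key step). Your additional verification that each $\{K_i\leq L\}$ is connected, via the induced-subgraph observation and Theorem \ref{connectedness}, is precisely the intended content of the phrase ``corresponding to the connected components'' in the statement.
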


Some disconnected cases are illustrated in Figure 2: the equidistant body is disconnected (left), the equidistant body is connected but its interior is not (right). Figure 3 shows the case of a not connected complement.

\begin{figure}
\centering
\includegraphics[scale=0.4]{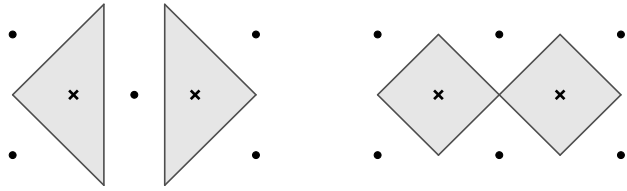}
\caption{Some disconnected cases.}
\end{figure}

\begin{figure}
\centering
\includegraphics[scale=0.4]{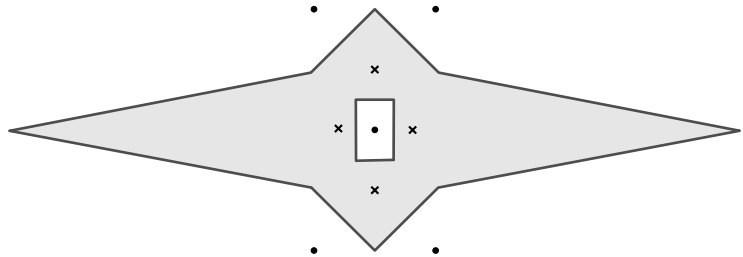}
\caption{Disconnected components of the complement of an equidistant body.}
\end{figure}

\begin{definition}
The graph representation $G$ of the equidistant body $\{K\leq L\}$ is disconnected with respect to the weight $w$ if $G=A\cup B$, where $A$ and $B$ are non-empty disjoint subsets of the vertices such that there are no edges of weight greater  or equal than $w$ with endpoints in $A$ and $B$, respectively. Otherwise $G$ is connected with respect to the weight $w$.
\end{definition}

\begin{theorem} The interior of the equidistant body $\{K\leq L\}$ is connected if and only if its graph representation is connected with respect to the weight $n-1$.
\end{theorem}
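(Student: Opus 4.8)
The plan is to fix the convex decomposition $\{K\le L\}=\bigcup_{i=1}^{p}C_i$ with $C_i:=\{X_i\le L\}$, where each $C_i$ is convex by Lemma~\ref{lem:03} and, since $L$ is finite so that no $X_i$ is an accumulation point of $L$, is of dimension $n$. Write $U:=\mathrm{int}\,\{K\le L\}$ and let $G_{n-1}$ denote the spanning subgraph of $G$ obtained by keeping only the edges of weight $w_{ij}=\dim\bigl(C_i\cap C_j\bigr)\ge n-1$; by the preceding definition $G$ is connected with respect to the weight $n-1$ exactly when $G_{n-1}$ is connected. I would prove the two implications separately, using throughout that $\bigcup_i\mathrm{int}\,C_i$ is dense in $U$ (because $C_i=\overline{\mathrm{int}\,C_i}$ for a full-dimensional convex set), so that the connectedness of $U$ is governed by how the open cells $\mathrm{int}\,C_i$ glue together.

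For sufficiency ($G_{n-1}$ connected $\Rightarrow U$ connected) the point is that a single edge of weight $\ge n-1$ already puts $\mathrm{int}\,C_i$ and $\mathrm{int}\,C_j$ into one arcwise connected subset of $U$. If $w_{ij}=n$ the two interiors meet and there is nothing to do. The essential case is $w_{ij}=n-1$: then $F:=C_i\cap C_j$ spans a hyperplane $H$, and the crucial geometric claim is that $C_i$ and $C_j$ lie on \emph{opposite} closed sides of $H$. I would establish this via tangent cones at a point $Z_0\in\mathrm{relint}\,F$: the tangent cone of $C_i$ at $Z_0$ contains the whole linear direction space of $H$ (as $Z_0$ is interior to $F$) and is contained in one closed half-space bounded by $H$ (as $H$ supports $C_i$), hence equals that half-space; if $C_i$ and $C_j$ sat on the same side both tangent cones would be the same half-space and the cells would overlap in dimension $n$, contradicting $w_{ij}=n-1$. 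Consequently $C_i\cup C_j$ covers a neighbourhood of $Z_0$, so $Z_0\in U$, and the broken segment from $p\in\mathrm{int}\,C_i$ to $Z_0$ to $q\in\mathrm{int}\,C_j$ stays in $U$ by convexity. Chaining this along a path in $G_{n-1}$ shows all $\mathrm{int}\,C_i$ lie in one connected set $W\subseteq U$; since $W\supseteq\bigcup_i\mathrm{int}\,C_i$ is dense in $U$, the closure of $W$ in $U$ is all of $U$, and $U$ is connected.

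For necessity I would argue the contrapositive ($G_{n-1}$ disconnected $\Rightarrow U$ disconnected). Partition the vertices into non-empty $A,B$ so that every cross pair satisfies $\dim(C_i\cap C_j)\le n-2$, and set $M=\bigcup_{i\in A}C_i$, $N=\bigcup_{j\in B}C_j$, both closed with $\{K\le L\}=M\cup N$ and $M\cap N$ a finite union of convex sets of dimension $\le n-2$. The heart is the inclusion $U\subseteq \mathrm{int}\,M\cup\mathrm{int}\,N$: given $Z\in U$ with a ball $B\subseteq M\cup N$, the sets $(B\cap M)\setminus N$ and $(B\cap N)\setminus M$ are disjoint and open in $B$ and together exhaust $B\setminus(M\cap N)$; since $M\cap N$ lies in finitely many affine subspaces of codimension $\ge 2$, the set $B\setminus(M\cap N)$ is connected, so one of the two pieces is empty, forcing $B\subseteq M$ or $B\subseteq N$, i.e. $Z\in\mathrm{int}\,M$ or $Z\in\mathrm{int}\,N$. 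Because $\mathrm{int}\,M\cap\mathrm{int}\,N\subseteq M\cap N$ is both open and of dimension $\le n-2$, it is empty; hence $U=\mathrm{int}\,M\sqcup\mathrm{int}\,N$ is a separation into non-empty relatively open sets (non-empty since $A,B\ne\emptyset$ and each $C_i$ is full-dimensional), and $U$ is disconnected.

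I expect the main obstacle to be the facet lemma in the second paragraph, namely that two full-dimensional convex cells whose intersection is exactly $(n-1)$-dimensional must lie on opposite sides of the spanning hyperplane, so that the relative interior of the shared facet consists of interior points of the whole body. The remaining technical input is the standard dimension-theoretic fact that removing a set of dimension $\le n-2$ from a ball leaves it connected, which I would justify in our setting simply by routing paths around the finitely many codimension-$\ge 2$ affine hulls of the pieces of $M\cap N$.
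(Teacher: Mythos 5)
Your proposal is correct in its overall architecture, and it is a hybrid: the sufficiency direction is essentially the paper's argument, while the necessity direction takes a genuinely different route. For sufficiency, the paper also routes a polygonal chain through points chosen in the interiors of weight-$n$ intersections or in the relative interiors of shared $(n-1)$-dimensional faces; your version is the same idea, stated at the level of the open cells $\mathrm{int}\,C_i$ and closed off with the density observation $C_i=\overline{\mathrm{int}\,C_i}$. For necessity, however, the paper argues directly: it takes an arc from $X_i$ to $X_j$ inside the interior, picks a point $Z$ where the arc meets the boundary of $\{X_i\le L\}$ (assumed, without loss of generality, to lie on an $(n-1)$-face), covers $B_Z(\varepsilon)\cap F^i_{n-1}$ by the remaining cells, extracts by dimension count a neighbour $X_{k_1}$ with $w_{ik_1}\ge n-1$, and iterates along the arc. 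Your contrapositive argument --- splitting into $M$ and $N$, showing $U\subseteq\mathrm{int}\,M\cup\mathrm{int}\,N$ because a ball minus the codimension-$\ge 2$ set $M\cap N$ stays connected, and concluding $U=\mathrm{int}\,M\sqcup\mathrm{int}\,N$ --- replaces the paper's iterative, somewhat hand-waved construction (the ``without loss of generality'' placement of $Z$ and the termination of the repetition are not spelled out there) by a single clean topological separation. What you lose is the explicit chain of heavy edges between two prescribed vertices; what you gain is rigor, since everything reduces to the standard fact that finitely many affine subspaces of codimension at least $2$ cannot disconnect a ball.

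One step in your sufficiency argument needs repair: the facet lemma as you sketch it does not follow from tangent cones alone. First, the parenthetical ``as $H$ supports $C_i$'' assumes part of what is being proved; second, even granting that the two tangent cones are opposite closed half-spaces, for general convex sets this does not imply that $C_i\cup C_j$ covers a neighbourhood of $Z_0$ (compare $C_i=\{y\le -x^2\}$, $C_j=\{y\ge x^2\}$ in the plane: opposite half-plane tangent cones at the origin, yet points $(\varepsilon,0)$ escape both). The hypothesis that saves you is precisely that $F=C_i\cap C_j$ is $(n-1)$-dimensional: since $C_i$ is full-dimensional it contains a point $w$ off $H$, hence $C_i\supseteq\mathrm{conv}\left(F\cup\{w\}\right)$, and this convex hull contains a closed half-ball around any $Z_0\in\mathrm{relint}\,F$ on the side of $w$; this half-ball lemma yields both the supporting property of $H$ (points of $C_i$ on both sides would force $Z_0\in\mathrm{int}\,C_i$ and hence $w_{ij}=n$), the opposite-sides claim, and the covering of a neighbourhood of $Z_0$. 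Alternatively, and more in the spirit of your sketch, note that each cell $\{X_i\le L\}$ is a finite intersection of closed half-spaces by formula (\ref{intersection}), i.e.\ a polyhedron, and a polyhedron coincides with its tangent cone in a neighbourhood of any of its points; with that remark your tangent-cone computation does suffice. Either patch is short, but one of them must be made explicit.
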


\begin{proof}
Suppose that the graph representation is connected with respect to the weight $n-1$ and let us modify the proof of Theorem \ref{connectedness} by changing the intermediate points $X_{ik_1}, X_{k_1 k_2}, \ldots, X_{k_m j}$ such that they belong to the interior of the intersection of the corresponding convex components (an edge of maximal weight) or they are in the relative interior of the adjacent faces of dimension $n-1$. The modification gives a continuous path (polygonal chain) from $X_i$ to $X_j$ ($i\neq j$) in the interior of the equidistant body. Since the arcwise connectedness implies the connectedness we are done. Conversely, suppose that the interior of the equidistant body $\{K\leq L\}$ is connected, i.e. it is arcwise connected because the connectedness and the arcwise connectedness are equivalent in case of open sets. To prove the connectedness of the graph representation with respect to the weight $n-1$ we are going to construct a sequence of edges of weight at least $n-1$ between $X_i$ and $X_j$ ($i\neq j$). If $X_j \in \{X_i \leq L\}$ then we are done because $\dim \{X_i \leq L\}\cap \{X_j \leq L\}=n$. Indeed, since a finite set has no accumulation points, Lemma \ref{lem:03} implies that $X_j$ is in the interior of $\{X_j \leq L\}$. Especially, each convex component is of dimension $n$. Finally, if a convex set of dimension $n$ intersects the interior  of another one then the intersection is of dimension $n$. Otherwise consider a continuous path $\widehat{X_i X_j}$ from $X_i$ to $X_j$ in the interior of the equidistant body and choose a common point $Z$ of $\widehat{X_i X_j}$ with the boundary of $\{X_i \leq L\}$. Since $Z$ is an interior point, we can suppose - without loss of generality - that $Z$ is lying on the face $F_{n-1}^i$ of dimension $n-1$ of the convex component $\{X_i \leq L\}$. Let $\varepsilon>0$ be small enough and $B_Z(\varepsilon)\subset \textrm{int\ } \{K\leq L\}$, where $B_Z(\varepsilon)$ is the open ball around $Z$ with radius $\varepsilon$. Then $B_Z(\varepsilon)\cap F_{n-1}^i$ must be covered by the finite collection $\{ \{X_k \leq L\} \ | \ k\neq i \}.$ Condition $\dim B_Z(\varepsilon)\cap F_{n-1}^i=n-1$ implies that there must be at least one intersection $\{X_i \leq L\}\cap \{X_{k_1} \leq L\}$ of dimension at least $n-1$. Therefore $w_{i k_1}$ (the weight of the edge $X_i X_{k_1}$) is at least $n-1$. Repeating the algorithm along the arc $\widehat{X_{k_1} X_j}$  we are done in finitely many steps.
\end{proof}

\subsection{Equidistant polytopes}

In what follows we are going to define the notion of equidistant polytopes. Some natural requirements are the connectedness (see Corollary \ref{discon}), the boundedness (see Lemma \ref{lem:01}) and the finiteness of the focal sets.

\begin{definition}
\label{def:01} Let $K$ and $L\subset \mathbb{R}^n$ be non-empty disjoint, finite sets and suppose that $K$ is a subset in the interior of the convex hull of $L$. The equidistant body $\{K\leq L\}$ is called an equidistant polytope if both its interior and its complement are connected. The equidistant polytope is of type $(q, p)$, where $|L|=q$ and $|K|=p$. 
\end{definition}

\begin{corollary}
\label{connectedboundary}
The boundary $\{K=L\}$ of an equidistant polytope is connected. 
\end{corollary}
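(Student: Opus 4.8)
The plan is to identify $\{K=L\}$ with the topological boundary $\partial B$ of the body $B:=\{K\le L\}$ and then deduce its connectedness from the connectedness of two closed sets that cover the sphere. First I would record the preliminary rigidity facts. By Lemma \ref{lem:01} the body $B$ is compact, and since $K$ is finite it is a finite union $B=\bigcup_{i=1}^{p}\{X_i\le L\}$ whose convex components are each of dimension $n$ (by Lemma \ref{lem:03}, a finite set having no accumulation points). A finite union of $n$-dimensional convex bodies is regular closed, that is $B=\overline{\mathrm{int}\,B}$; consequently $\{K=L\}=\partial B$, and because the interior is connected by Definition \ref{def:01}, the set $B=\overline{\mathrm{int}\,B}$ is connected as well.

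Next I would pass to the one-point compactification $S^n=\mathbb{R}^n\cup\{\infty\}$. Writing $W:=S^n\setminus B$, the complement $\mathbb{R}^n\setminus B$ is connected and unbounded by Definition \ref{def:01}, so $\infty$ is one of its limit points and $W$ is connected; hence its closure $\overline W=W\cup\partial B$ is connected too. Thus $S^n=B\cup\overline W$ is a covering of the sphere by two \emph{connected closed} sets whose intersection is exactly $B\cap\overline W=\partial B=\{K=L\}$.

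The heart of the argument is that $S^n$ is unicoherent for $n\ge 2$: the intersection of two connected closed sets whose union is $S^n$ is connected. Concretely I would run the reduced Mayer--Vietoris sequence of the cover $\{B,\overline W\}$,
\begin{equation*}
\tilde H_1(S^n)\to \tilde H_0(\partial B)\to \tilde H_0(B)\oplus \tilde H_0(\overline W)\to \tilde H_0(S^n),
\end{equation*}
and use $\tilde H_1(S^n)=\tilde H_0(S^n)=0$ (valid for $n\ge 2$) to obtain $\tilde H_0(\partial B)\cong \tilde H_0(B)\oplus \tilde H_0(\overline W)=0$, the last equality holding because $B$ and $\overline W$ are connected. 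Vanishing of $\tilde H_0(\partial B)$ is precisely the connectedness of $\{K=L\}$.

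The main technical point to be careful about is the admissibility of Mayer--Vietoris for the \emph{closed} cover $\{B,\overline W\}$; this is where the polytopal nature of the problem enters. Since $B$, $\overline W$ and $\partial B$ are finite polytopal complexes, a common triangulation of $S^n$ realizes them as subcomplexes, so the pair is excisive and the sequence is valid (equivalently, one thickens each set to a deformation-retracting open neighbourhood). Two minor loose ends remain: confirming once and for all the regular-closedness identity $\{K=L\}=\partial B$ invoked at the start, and disposing of the low-dimensional case, $n=1$ being trivial while the homological input applies from $n\ge 2$ on. I expect the excisiveness/triangulation bookkeeping to be the only genuine obstacle; the homological computation itself is immediate.
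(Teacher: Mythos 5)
Your argument is correct, but there is essentially nothing in the paper to compare it against: Corollary \ref{connectedboundary} is stated there without any proof, as if it were an immediate consequence of Definition \ref{def:01}. What your proof makes explicit is that the implication is \emph{not} a formal triviality: a closed disk together with one isolated exterior point has connected interior and connected complement but disconnected boundary, so the conclusion genuinely needs the regular closedness $B=\overline{\mathrm{int}\,B}$ (available here because the convex components $\{X_i\le L\}$ are $n$-dimensional by Lemma \ref{lem:03}, finite sets having no accumulation points) and then some form of unicoherence of the sphere; your Mayer--Vietoris computation on the closed cover $\{B,\overline{W}\}$ of $S^n$ is exactly the standard route, and the excisiveness issue you flag is settled either by triangulating $S^n$ with the compact polyhedron $B$ as a subcomplex, as you suggest, or simply by quoting the classical theorem that $S^n$ ($n\ge 2$) is unicoherent with respect to arbitrary closed connected covers. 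Two of your declared loose ends deserve a precise word. First, the identity $\{K=L\}=\partial B$ follows from the \emph{disjointness} of $K$ and $L$ rather than from regular closedness: the inclusion $\partial B\subseteq\{K=L\}$ is pure continuity, while for the converse, if $d(X,K)=d(X,L)$ and $Y\in L$ realizes $d(X,L)$, then any point displaced from $X$ slightly toward $Y$ is strictly closer to $L$ than to $K$ (equality would force a point of $K$ to coincide with $Y$), so $X\notin\mathrm{int}\,B$. Second, the case $n=1$ is not merely trivial but vacuous: a non-empty compact subset of $\mathbb{R}$ never has connected complement, so no equidistant polytopes exist on the line. With these two points pinned down, your proof is complete and supplies the justification that the paper omits.
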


The equidistant polytopes of type $(q,1)$ are convex polytopes. It is a direct consequence of the halfspace intersection formula (\ref{halfspaceinter}) and Lemma \ref{lem:01}: an equidistant polytope of type $(q, 1)$ is a non-empty, compact intersection of finitely many closed halfspaces. Since any convex polytope can be given as an equidistant polytope of type $(q,1)$, the equidistancy is the generalization of the convexity, see \cite{plane} and \cite{space}. In a similar way we can speak about equidistant functions \cite{equidfunc} by requiring its epigraph to be an equidistant body. Let $\{K\leq L\}$ be an equidistant polytope; since $K$ must be in the interior of the convex hull of $L$, the set of the outer focal points must contain at least $n+1$ points. 

\begin{lemma}
\label{cor:01}
If $K$ and $L\subset \mathbb{R}^n$ are non-empty disjoint finite sets, $|L|=n+1$ and $K$ is a subset in the interior of the convex hull of $L$, then the equidistant body $\{K\leq L\}$ is a star-shaped set.  
\end{lemma}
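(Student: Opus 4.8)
The plan is to exhibit an explicit star center, namely the circumcenter of the simplex spanned by $L$. First I would record a structural observation: since $K$ is non-empty and contained in the interior of $\textrm{conv\ } L$, this convex hull is $n$-dimensional, so the $n+1$ points of $L=\{Y_1,\dots,Y_{n+1}\}$ are affinely independent and form an $n$-simplex. Consequently there is a unique point $O$ equidistant from all vertices, the circumcenter, and I write $R:=d(O,Y_1)=\dots=d(O,Y_{n+1})$ for the circumradius, so that $d(O,L)=R$ and every $Y_k$ lies on the sphere $\partial B(O,R)$.

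Next I would reduce star-shapedness to a statement about this single point. By Lemma \ref{lem:00} we have $\{K\leq L\}=\bigcup_{i=1}^{p}\{X_i\leq L\}$, and by Lemma \ref{lem:03} each component $\{X_i\leq L\}$ is convex (a finite set has no accumulation points). I claim that $O$ lies in every one of these components. Indeed, $O\in\{X_i\leq L\}$ means exactly $d(O,X_i)\leq d(O,Y_k)=R$ for all $k$, i.e. $X_i\in\overline{B}(O,R)$, the closed ball of radius $R$ centered at $O$. This is where the simplex structure enters: $\overline{B}(O,R)$ is convex and contains all vertices $Y_k$, each lying at distance $R$ from $O$, hence it contains their convex hull $\textrm{conv\ } L$. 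Since $X_i\in\textrm{int\ }\textrm{conv\ } L\subset\overline{B}(O,R)$, the required inequality $d(O,X_i)\leq R$ holds for every $i$, and therefore $O\in\bigcap_{i=1}^{p}\{X_i\leq L\}$.

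Finally I would conclude. Take any $Z\in\{K\leq L\}$; by Lemma \ref{lem:00} there is an index $i$ with $Z\in\{X_i\leq L\}$. Both $O$ and $Z$ lie in the convex set $\{X_i\leq L\}$, so the whole segment joining them lies in $\{X_i\leq L\}\subset\{K\leq L\}$. As $Z$ was arbitrary, $\{K\leq L\}$ is star-shaped with respect to $O$. The only genuine content is the middle step: guessing that the circumcenter is the right center and noticing that a simplex is always contained in its circumball; everything else follows from the convexity of the components and the union formula of Lemma \ref{lem:00}. In particular no delicate limiting or compactness estimate is needed here, in contrast to the boundedness argument of Lemma \ref{lem:01}.
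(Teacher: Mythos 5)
Your proof is correct and follows essentially the same route as the paper: the star center is the circumcenter of the simplex spanned by $L$, combined with Lemma \ref{lem:00} and the convexity of the components $\{X_i\leq L\}$. The only (harmless) difference is that the paper shrinks the circumradius slightly to obtain a ball \emph{strictly} separating $K$ from $L$, so that the center is an interior point of every component, whereas you use the circumcenter itself with non-strict inequalities, which is all that star-shapedness requires.
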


\begin{proof} The idea is to provide the existence of a closed ball strictly separating $K$ and $L$ in the sense that the focal points of $K$ are inside but the focal points of $L$ are outside\footnote{The combinatorial  criteria of the existence of such a separating ball can be formulated as a Kirchberger-type theorem \cite{Lay}: if for any subset $T\subset K \cup L$ containing $n+3$ points there is a ball strictly separating $T\cap K$ and $T\cap L$ then there is a ball strictly separating $K$ and $L$. }. Then the center $X_*$ of the ball satisfies the inequality
$$\max \{d(X_*, X_1), \ldots, d(X_*, X_p)\} < d(X_*,L)=\min \{d(X_*, Y_1), \ldots, d(X_*, Y_q)\},$$
where $X_1, \ldots, X_p$ are the points in $K$ and $Y_1, \ldots, Y_q$ are the points in $L$. Therefore, by a continuity argument, $X_*$ is an interior point of every convex component $\{X_i\leq L\}$, where $i=1, \ldots, p$. By Lemma \ref{lem:00}, this means that the equidistant body $\{K\leq L\}$ is star-shaped with respect to any point in an open ball around $X_*$ with a sufficiently small radius. If $q=n+1$ then $L$ is a simplex and we can easily construct a strictly separating ball by a slight decreasing of the radius of its circumscribed sphere.
\end{proof}

\begin{corollary}
If $K$ and $L\subset \mathbb{R}^n$ are non-empty disjoint finite sets, $|L|=n+1$ and $K$ is a subset in the interior of the convex hull of $L$, then the equidistant body $\{K\leq L\}$ is an equidistant polytope.   
\end{corollary}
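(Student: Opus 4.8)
The plan is to verify directly the two defining properties in Definition \ref{def:01}---connectedness of the interior and of the complement---by combining the star-shapedness furnished by Lemma \ref{cor:01} with the boundedness granted by Lemma \ref{lem:01}. Write $S=\{K\leq L\}$. The hypotheses here are exactly those of both lemmas, so $S$ is bounded, and it is closed as a finite union of the closed convex components $\{X_i\leq L\}=\bigcap_{Y\in L}\{X_i\leq Y\}$. Reading off the proof of Lemma \ref{cor:01} rather than only its statement, I would record the sharper conclusion that there are a centre $X_*$ and a radius $\rho>0$ with $B_{X_*}(\rho)$ contained in the interior of \emph{every} convex component; hence $S$ is star-shaped with respect to every point of $B_{X_*}(\rho)$, not merely with respect to one point.

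For the interior I would fix an arbitrary interior point $P$, choose $\delta>0$ with $B_P(\delta)\subseteq S$, and show that the convex hull $\textrm{conv}\left(B_{X_*}(\rho)\cup B_P(\delta)\right)$ lies in $S$. Indeed, a typical point of this hull is a convex combination $(1-s)c+sb$ with $c\in B_{X_*}(\rho)$ and $b\in B_P(\delta)\subseteq S$, and since $S$ is star-shaped with respect to $c$ the whole segment $[c,b]$, in particular that point, belongs to $S$. As the convex hull of an open set is open, this hull is an open, convex, hence connected subset of $\textrm{int}\,S$ containing both $X_*$ and $P$. Since $P$ was arbitrary, every interior point is joined to $X_*$ inside $\textrm{int}\,S$, so the interior is (path-)connected.

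For the complement I would use only star-shapedness with respect to the single point $X_*$ together with boundedness. Pick $R$ with $S\subseteq\overline{B_{X_*}(R)}$, so that the exterior $\{X:\ |X-X_*|>R\}$ is a connected subset of the complement (this is where $n\geq 2$ enters). Given $Q\notin S$, I would parametrise the ray $r(t)=X_*+t(Q-X_*)$; since $S$ is closed and star-shaped with respect to $X_*$, the set $\{t\geq 0:\ r(t)\in S\}$ is a closed interval $[0,t_0]$, and $r(1)=Q\notin S$ forces $t_0<1$. Therefore the tail $r([1,\infty))$ avoids $S$ and eventually enters the exterior, joining $Q$ within the complement to the connected exterior. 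Thus the complement is connected, and together with the previous paragraph Definition \ref{def:01} yields the claim.

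The hard part will be the interior step, and specifically the point that it genuinely requires the \emph{ball} of star-centres: star-shapedness with respect to a single interior point would not control the segment $[c,b]$ above and so would not push the convex hull of the two balls into $S$. It is therefore essential to extract the stronger statement from the proof of Lemma \ref{cor:01}. The complement step is routine once boundedness and the radial monotonicity of star-shaped sets are available; its only caveat is the standing assumption $n\geq 2$, without which the exterior of a ball, and typically the complement itself, would fail to be connected.
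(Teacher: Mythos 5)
Your proof is correct and follows exactly the route the paper intends: the paper states this corollary without any proof, as an immediate consequence of Lemma \ref{cor:01} (whose proof indeed yields star-shapedness with respect to a whole ball of centres around $X_*$) combined with the boundedness from Lemma \ref{lem:01}, and your argument simply supplies the connectedness-of-interior and connectedness-of-complement details that the authors leave implicit. Your remark that $n\geq 2$ is needed for the complement to be connected is a fair caveat about the statement itself (it fails literally for $n=1$ under Definition \ref{def:01}), not a gap in your argument.
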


\subsection{Voronoi decomposition and its correspondence to the equidistant bodies} One of the most important application of equidistant bodies of the form $\{X\leq L\}$ is the Voronoi decomposition of the space. Let $K:=\{X_1, \ldots, X_m\} \subset \mathbb{R}^n$ be a finite set containing different elements and consider the equidistant bodies
$$V(X_i, K):=\{X_i \leq K\setminus \{X_i\}\} \quad (i=1, \ldots, m).$$
It is clear that they are $n$-dimensional convex subsets (Voronoi cells) such that 
$$\mathbb{R}^n=\bigcup_{i=1}^m V(X_i, K)
$$
and
$$\ \textrm{int} \  V(X_i, K) \cap \textrm{int}\  V(X_j, K)=\emptyset \quad (i\neq j=1, \ldots, n).$$
The set $V(X_i, K)$ contains the points in the space, where the distance $d(X,K)$ is attained at $X_i$, i.e. $X_i$ is the closest point of $K$ to any $X\in V(X_i, K)$. The collection of the cells $V(X_1, K)$, $\ldots$, $V(X_m, K)$ is called the Voronoi decomposition of the space with respect to the set $K$.
\begin{corollary} If $K$ and $L$ are non-empty finite, disjoint subsets containing different elements then  
$$\{K\leq L\}=\bigcup_{X_i\in K} V(X_i, K \cup L).$$
\end{corollary}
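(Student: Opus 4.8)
The plan is to prove the set equality by double inclusion, translating both sides into elementary statements about the nearest-point distances $d(X, K)$, $d(X, L)$ and $d(X, K\cup L)$. The guiding observation is the pointwise identity $d(X, K \cup L) = \min\{d(X, K), d(X, L)\}$, valid for every $X$ because the infimum over a union is the smaller of the two infima; here all three infima are attained since $K$, $L$ and $K \cup L$ are finite, hence compact. Recall that by definition $V(X_i, K\cup L) = \{X_i \le (K\cup L)\setminus\{X_i\}\}$ consists exactly of those points $X$ for which $X_i$ is a closest point of the whole system $K\cup L$.

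For the inclusion $\bigcup_{X_i \in K} V(X_i, K\cup L) \subseteq \{K \le L\}$ I would start from a point $X \in V(X_i, K\cup L)$ for some $X_i \in K$. By the defining property of the Voronoi cell, $X_i$ is a closest point of $K \cup L$ to $X$, so in particular $d(X, X_i) \le d(X, Y)$ for every $Y \in L$. Since $X_i \in K$ forces $d(X, K) \le d(X, X_i)$, and the previous inequalities give $d(X, X_i) \le d(X, L)$, I obtain $d(X, K) \le d(X, L)$, i.e. $X \in \{K \le L\}$.

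For the reverse inclusion I would take $X \in \{K \le L\}$ and use the finiteness of $K$ to choose an index $i$ with $d(X, X_i) = d(X, K)$. It then remains to verify that $X_i$ is a closest point of the whole set $K \cup L$: for any $X_j \in K$ one has $d(X, X_i) = d(X, K) \le d(X, X_j)$, while for any $Y \in L$ one has $d(X, X_i) = d(X, K) \le d(X, L) \le d(X, Y)$, the middle inequality being precisely the hypothesis $X \in \{K \le L\}$. Hence $X \in V(X_i, K \cup L)$, completing the second inclusion.

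Alternatively, and perhaps more in the spirit of the preceding development, I could bypass the nearest-point bookkeeping by combining Lemma \ref{lem:00} with formula (\ref{intersection}): writing $V(X_i, K\cup L) = \{X_i \le (K \setminus \{X_i\}) \cup L\} = V(X_i, K) \cap \{X_i \le L\}$ reduces the claim to the identity $\bigcup_i \big(V(X_i, K) \cap \{X_i \le L\}\big) = \bigcup_i \{X_i \le L\}$, whose right-hand side equals $\{K \le L\}$ by Lemma \ref{lem:00} and whose inclusion ``$\supseteq$'' follows because the cells $V(X_i, K)$ cover $\mathbb{R}^n$. I do not expect a genuine obstacle here: the only point requiring a little care is that the minimal distances are actually attained, so that a closest focal point exists — and this is exactly where the finiteness (compactness) hypothesis enters.
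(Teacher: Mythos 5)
Your proposal is correct. Note that the paper states this corollary without any proof, treating it as immediate from the preceding development; your double-inclusion argument via nearest points supplies exactly the missing details, and your alternative sketch --- writing $V(X_i, K\cup L)=V(X_i,K)\cap\{X_i\leq L\}$ via (\ref{intersection}) and invoking Lemma \ref{lem:00} together with the covering property of the cells $V(X_i,K)$ --- is precisely the derivation the paper intends, so there is no substantive difference in approach.
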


\section{Equidistant polygons in the plane: the hypergraph representation and the maximal number of the vertices}

The following investigations are motivated by the discrete version of the problem posed in \cite{PS}: \emph{characterize all closed sets of the plane that can be realized as the equidistant set of two connected disjoint closed sets.}

\subsection{The hypergraph representation of equidistant polygons} 

Let $K$ and $L\subset \mathbb{R}^2$ be non-empty disjoint finite subsets in the plane such that $K$ is contained in the interior of the convex hull of $L$. In what follows we are going to estimate the maximal number of the edges (vertices) of an equidistant polygon in terms of the number of elements in the focal sets. Using a continuity argument it can be easily seen that the decreasing of the number of the vertices is impossible by a slight modification of the position\footnote{ Since the convex components are non-empty compact intersections of finitely many half-spaces, they depend continuously on the position of the focal points in any bounded region of the space. So does their finite union. Therefore vertices (breakages along the boundary) cannot be straightened by a slight modification of the position of the focal points but straight line segments can be broken.} of the focal points (the increasing of the number of the vertices is possible). This means that the regularity conditions
\begin{itemize}
\item [(C1)] there are no collinear triplets among the points of $K\cup L$,
\item [(C2)] there are no concircular quadruples among the points of $K\cup L$
\end{itemize}
can be supposed without loss of generality. The regularity conditions do not imply in general that we have an equidistant polygon as Figure 3 shows by a slight modification of the focal sets. In the special case of 2D, the connectedness of the interior of an equidistant polygon provides that there are no self-intersections of its boundary and the connectedness of its complement provides that there are no holes in its interior. Using the boundedness criterion ($K\subset \textrm{int\ conv\ } L$) and the finiteness of the focal sets, Corollary \ref{connectedboundary} implies that the boundary of an equidistant polygon in the plane is a simple closed polygonal chain (the edges belong to the perpendicular bisectors of the elements in the focal sets $K$ and $L$, respectively). Therefore an equidistant polygon in the plane is a Jordan polygon (cf. the Jordan curve theorem). 

\begin{definition} The $3$-uniform hypergraph representation of an equidistant polygon satisfying $(C1)$ and 
$(C2)$ consists of the vertices $K\cup L$ and the edges are the triplets of the focal points provided that the circle determined by them does not contain any focal point in its interior. Edges of types $\{X_{i_1}, X_{i_2}, X_{i_3}\}$ or $\{Y_{j_1}, Y_{j_2}, Y_{j_3}\}$ are called monochromatic. The colored edges are of types $\{X_{i_1}, Y_{j_1}, X_{i_2}\}$, or $\{Y_{j_1}, X_{i_1}, Y_{j_2}\}$, respectively. The weight of a colored edge is  the angle $\delta_{i_1 i_2}^{j_1}=\angle X_{i_1} Y_{j_1} X_{i_2}$, or $\omega_{j_1 j_2}^{i_1}=\angle Y_{j_1} X_{i_1} Y_{j_2}$.
\end{definition}

To prevent the inclusion of focal points in the interior of the circle determined by a colored edge it is sufficient and necessary for the weight to satisfy 
$$\delta_{i_1 i_2}^{j_1}=\max\{\angle X_{i_1} Z X_{i_2} \ | \ Z\in K\cup L \ \textrm{and} \ Z\in H_K^+ (i_1, j_1, i_2)\}\leq $$
$$\pi - \max\{\angle X_{i_1} Z X_{i_2} \ | \ Z\in K\cup L \ \textrm{and} \ Z\in H_K^- (i_1, j_1, i_2)\},$$
where the open halfplane $H_K^+ (i_1, j_1, i_2)$ is bounded by the line $X_{i_1} X_{i_2}$ and it contains the point $Y_{j_1}$,  $H_K^- (i_1, j_1, i_2)$ is the opposite open halfplane, or
$$\omega_{j_1 j_2}^{i_1}=\max\{\angle Y_{j_1} Z Y_{j_2} \ | \ Z\in K\cup L \ \textrm{and} \ Z\in H_L^+ (j_1, i_1, j_2)\}\leq$$
$$ \pi - \max\{\angle  Y_{j_1} Z Y_{j_2} \ | \ Z\in K\cup L \ \textrm{and} \ Z\in H_L^- (j_1, i_1, j_2)\},$$
where the open halfplane $H_L^+(j_1, i_1, j_2)$ is bounded by the line $Y_{j_1} Y_{j_2}$ and it contains the point $X_{i_1}$, $H_L^- (j_1, i_1, j_2)$ is the opposite open halfplane.

\begin{corollary}
\label{vertices} Consider the hypergraph representation of an equidistant polygon satisfying $(C1)$ and $(C2)$. The centers of the circles determined by monochromatic edges of type $\{X_{i_1}, X_{i_2}, X_{i_3}\}$ and $\{Y_{j_1}, Y_{j_2}, Y_{j_3}\}$ are interior and exterior points of the equidistant polygon, respectively. The centers of the circles determined by colored edges are the vertices of the equidistant polygon. 
\end{corollary}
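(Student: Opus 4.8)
The plan is to read the hypergraph as the Delaunay complex of the combined point set $K\cup L$ and to locate each circumcenter via the Voronoi decomposition $\{K\leq L\}=\bigcup_{X_i\in K} V(X_i, K\cup L)$ recorded just above. First I would note that, by definition, a triplet $T=\{P_1,P_2,P_3\}\subset K\cup L$ is an edge of the hypergraph exactly when its circumscribed circle — unique, since $(C1)$ forbids collinear triplets — contains no focal point in its interior. Writing $C$ for the circumcenter and $R$ for the circumradius, this reads $d(C,P_1)=d(C,P_2)=d(C,P_3)=R$ while $d(C,Z)\geq R$ for every focal point $Z$; and $(C2)$ (no concircular quadruple) forces $d(C,Z)>R$ strictly for each focal point $Z\notin T$. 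Thus $C$ is equidistant to the three sites $P_1,P_2,P_3$ and strictly farther from all the others, i.e. $C$ is precisely a Voronoi vertex of $K\cup L$ where the three cells $V(P_1,K\cup L)$, $V(P_2,K\cup L)$, $V(P_3,K\cup L)$ meet. This sets up the correspondence between hypergraph edges and Voronoi vertices.

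Next I would split according to the colouring of $T$, using that the interior of the polygon consists of the points with $d(X,K)<d(X,L)$ and its exterior of the points with $d(X,L)<d(X,K)$ — both open sets by continuity of the distance functions. If $T=\{X_{i_1},X_{i_2},X_{i_3}\}$ is monochromatic in $K$, then $d(C,K)=R$ while $d(C,L)>R$, so $d(C,K)<d(C,L)$ and $C$ is an interior point. Symmetrically, for a monochromatic triple in $L$ one gets $d(C,L)=R<d(C,K)$, so $C$ lies in the exterior. Finally, for a coloured triple such as $\{X_{i_1},Y_{j_1},X_{i_2}\}$ (or $\{Y_{j_1},X_{i_1},Y_{j_2}\}$) one obtains $d(C,K)=R=d(C,L)$, so $C\in\{K=L\}$ lies on the boundary.

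It then remains to upgrade ``boundary point'' to ``vertex'' in the coloured case. At the Voronoi vertex $C$ the three meeting cells have mixed colour, so among the three Voronoi edges incident to $C$ exactly two separate a $K$-cell from an $L$-cell; each such edge lies on the perpendicular bisector of a mixed pair $X_iY_j$ and is therefore an edge of the boundary $\{K=L\}$, which (as recalled in the preceding discussion) is a simple closed polygonal chain whose edges lie on such bisectors. Hence two distinct boundary edges meet at $C$, and since $(C1)$ prevents $X_{i_1},Y_{j_1},X_{i_2}$ from being collinear, the two bisectors are non-parallel, so the incident boundary edges are genuinely transverse and $C$ is a corner, i.e. a vertex of the polygon. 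Conversely, every vertex of the polygon is the meeting point of two boundary edges, hence a Voronoi vertex of $K\cup L$ equidistant to three sites whose circumcircle is empty and of mixed colour, that is, a coloured hypergraph edge.

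I expect the main obstacle to be the careful passage from the regularity conditions $(C1)$–$(C2)$ to the non-degeneracy of the Voronoi diagram — namely that every Voronoi vertex has degree three and that the ``empty circumcircle'' triples are exactly the Delaunay triangles dual to these vertices — together with the verification that a mixed-colour vertex is a true corner of the boundary chain rather than a flat point. Once this structural dictionary is in place, the distance comparison $d(C,K)$ versus $d(C,L)$ that classifies the three colour types is entirely routine.
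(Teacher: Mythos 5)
Your proposal is correct, and its first half coincides with the paper's own argument: for a monochromatic edge the paper makes exactly your distance comparison $d(C,K)=R<d(C,L)$ (strictness from $(C2)$) to place the center in the interior, and symmetrically for the exterior. Where you diverge is the colored case. The paper argues locally and by hand: the circumcenter is equidistant from $K$ and $L$, the perpendicular bisectors of the two chords $X_{i_1}Y_{j_1}$ and $X_{i_2}Y_{j_1}$ meet there, and $(C2)$ guarantees that in a small neighbourhood the only equidistant points lie on these two bisectors, which meet at a genuine angle — so the center is a corner (the paper even records that it is a concave corner for edges of type $\{X,Y,X\}$ and convex for $\{Y,X,Y\}$, a refinement your proof does not recover but which the statement does not demand). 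You instead route the whole argument through the Delaunay--Voronoi duality, identifying hypergraph edges with degree-three Voronoi vertices of $K\cup L$ and boundary edges of the polygon with mixed-colour Voronoi edges; this exploits the correspondence $\{K\leq L\}=\bigcup_{X_i\in K}V(X_i,K\cup L)$ that the paper itself records just before this section but never uses in this proof. Your route has two advantages: the non-degeneracy bookkeeping ($(C1)$ for existence of circumcircles and transversality, $(C2)$ for degree three) is organized once and for all in the Voronoi dictionary rather than invoked ad hoc, and you get the converse direction — every vertex of the polygon is the center of a colored edge — explicitly, which the paper only asserts implicitly and later relies on in the vertex-counting Lemma \ref{vertexmax}. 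The one point you flag as an ``expected obstacle'' (that $(C2)$ forces Voronoi vertices of degree three, and that mixed-colour Voronoi edges are exactly the boundary edges) is indeed the load-bearing step, but it is a standard and easily verified fact under your hypotheses, so the gap is one of routine verification rather than of substance.
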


\begin{proof}
If $\{X_{i_1}, X_{i_2}, X_{i_3}\}$ is a monochromatic edge then there are no focal points in the interior of the circle determined by $X_{i_1}$, $X_{i_2}$ and $X_{i_3}$. The existence of such a circle is due to $(C1)$. Let $r_{i_1 i_2 i_3}$ and $X_{i_1 i_2 i_3}$ be the radius and the center of the circle, respectively. We have that 
$$d(X_{i_1 i_2 i_3}, K)= r_{i_1 i_2 i_3} < \min \{d(X_{i_1 i_2 i_3}, Y_1), \ldots, d(X_{i_1 i_2 i_3}, Y_q)\}=d(X_{i_1 i_2 i_3}, L),$$
where the strict inequality is due to $(C2)$. Therefore $X_{i_1 i_2 i_3}$ is in the interior of $\{K\leq L\}$. The argument is similar in case of a monochromatic edge $\{Y_{j_1}, Y_{j_2}, Y_{j_3}\}$. Taking a colored edge $\{X_{i_1}, Y_{j_1}, X_{i_2}\}$ let $r_{i_1 j_1 i_2}$ and $V_{i_1 j_1 i_2}$ be the radius and the center of the circle determined by the points of the triplet, respectively. The existence of such a circle is due to $(C1)$. We have that  
$$d(V_{i_1 j_1 i_2}, K)=r_{i_1 j_1 i_2}=d(V_{i_1 j_1 i_2}, Y_{j_1})=d(V_{i_1 j_1 i_2}, L).$$
This means that $V_{i_1 j_1 i_2}$ is an equidistant point of $K$ and $L$. The perpendicular bisectors of the chords $X_{i_1}Y_{j_1}$ and $X_{i_2}Y_{j_1}$ intersect each other at $V_{i_1 j_1 i_2}$. According to $(C2)$ there are no equidistant points in a sufficiently small open neighbourhood of $V_{i_1 j_1 i_2}$ except the points of the perpendicular bisectors. They determine a concave angle because  $X_{i_1}$ and $X_{i_2}$ are automatically in the interior of $\{K\leq L\}$. The argument is similar in case of a colored edge $\{Y_{j_1}, X_{i_1}, Y_{j_2}\}$. 
\end{proof}

The colored edge $\{X_{i_1}, Y_{j_1}, X_{i_2}\}$ represents a single inner change in the sense that the vertex (the center of the circle determined by the elements of the triplet) is due to the change of the inner focal points (the outer focal point is the same). The circle is passing through exactly two of the inner and exactly one of the outer focal points (concave angles). The colored edge $\{Y_{j_1}, X_{i_1}, Y_{j_2}\}$ represents a single outer change in the sense that the vertex (the center of the circle determined by the elements of the triplet) is due to the change of the outer focal points (the inner focal point is the same). The circle is passing through exactly two of the outer and exactly one of the inner focal points (convex angles). Condition $(C2)$ does not allow "double changes" in the sense that the vertex is due to the simultaneous change of the outer and the inner focal points. Such a kind of change will appear among the cases of the special arrangements of the focal points: Figure 8 shows a double change at $V_1$, single outer changes at $V_2$ and $V_4$, a single inner change at $V_3$. 

\begin{figure}
\centering
\includegraphics[scale=0.35]{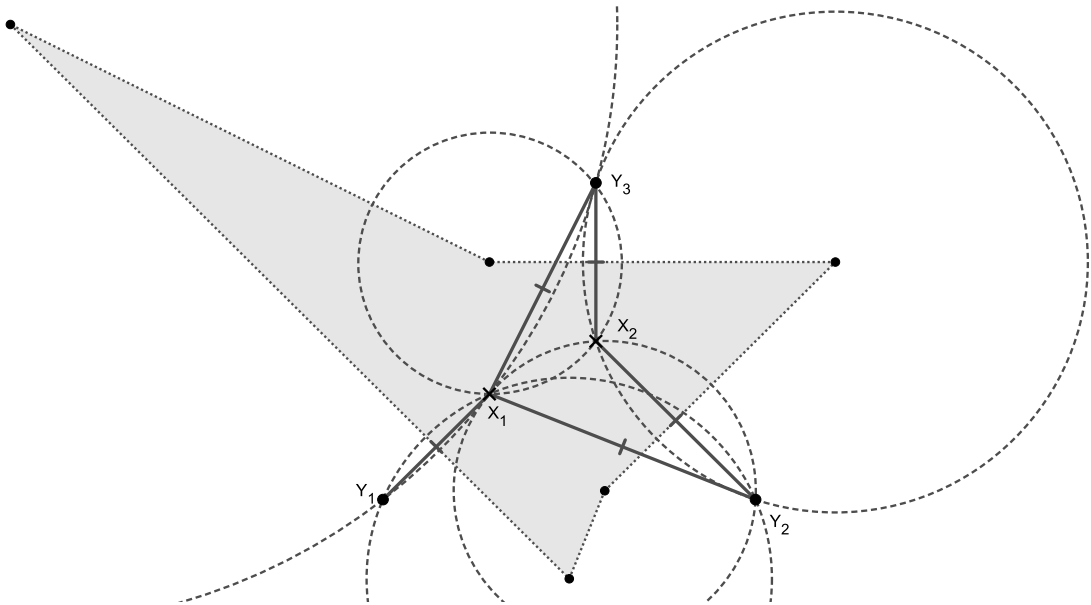}
\caption{The bigraph of the pentagon: the proof of Lemma \ref{vertexmax}.}
\end{figure}

\begin{lemma}
\label{vertexmax} An equidistant polygon of type $(q,p)$ has at most $p+q$ vertices.
\end{lemma}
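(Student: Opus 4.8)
The plan is to count the vertices by sorting them into the two kinds distinguished in the discussion following Corollary \ref{vertices}: every vertex of the polygon is the centre of a circle through a colored triple, hence it is either a \emph{single inner change} (a concave vertex, from a triple $\{X_{i_1},Y_{j_1},X_{i_2}\}$ of two inner and one outer point) or a \emph{single outer change} (a convex vertex, from a triple $\{Y_{j_1},X_{i_1},Y_{j_2}\}$ of two outer and one inner point), since $(C2)$ forbids a simultaneous change. Because $K\subset \textrm{int conv } L$ and the focal sets are finite, Corollary \ref{connectedboundary} makes $\{K=L\}$ a simple closed polygonal chain, so I may traverse it cyclically. Writing $c$ for the number of convex vertices and $v$ for the number of concave ones, we have $V=c+v$, and it suffices to prove $v\le p$ and $c\le q$.

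To organise the traversal I would pass to the Voronoi decomposition of the combined set $K\cup L$, using the correspondence $\{K\le L\}=\bigcup_{X_i\in K}V(X_i,K\cup L)$ from the subsection on the Voronoi decomposition. Each edge of the polygon separates an inner cell $V(X_i,K\cup L)$ from an outer cell $V(Y_j,K\cup L)$, and along that edge the nearest point of $K$ is exactly $X_i$ while the nearest point of $L$ is exactly $Y_j$. Hence, running along the closed boundary, the nearest inner focal point is locally constant and jumps precisely at the concave vertices, and the nearest outer focal point jumps precisely at the convex vertices. Grouping the edges into maximal arcs on which the nearest inner point is a fixed $X_i$, the number $v$ of concave vertices is at most the number of such arcs; likewise $c$ is at most the number of maximal arcs of constant nearest outer point. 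The whole estimate therefore reduces to the \textbf{contiguity statement}: each inner focal point $X_i$ is the nearest point of $K$ along a single connected arc of the boundary, and each outer focal point $Y_j$ along a single connected arc; granting this, there are at most $p$ inner arcs and at most $q$ outer arcs, so $v\le p$ and $c\le q$.

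This contiguity is the heart of the matter and the step I expect to be the main obstacle, since for a general family of convex sets the portion of one convex boundary lying outside the others need not be connected. Here I would instead invoke the two connectedness conditions built into Definition \ref{def:01}. The arc(s) belonging to $X_i$ form exactly the part of the boundary of the convex cell $V(X_i,K\cup L)$ that it shares with outer cells, i.e. the outward-facing portion of a convex polygon. Were this portion split into two or more arcs, the complement $\{K\le L\}^{c}$ would have to meet the convex cell along two separate arcs and reconnect them by wrapping around the intervening inner cells; as those cells lie in the interior of the polygon, this would trap a bounded component of the complement, contradicting the connectedness of $\{K\le L\}^{c}$. The symmetric argument for an outer cell $V(Y_j,K\cup L)$, where splitting its inward-facing portion would disconnect the interior of the polygon, is excluded by the connectedness of the interior. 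With contiguity established, $v\le p$ and $c\le q$, and therefore $V=v+c\le p+q$.
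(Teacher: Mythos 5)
Your reduction is sound and, in fact, runs parallel to the paper's own argument: under $(C1)$--$(C2)$ every vertex is a single inner or a single outer change, so the vertex count equals the number of maximal boundary arcs on which the nearest inner (respectively outer) focal point is constant, and everything hinges on your \emph{contiguity statement}. You also diagnosed correctly that contiguity is the hard step. But your justification of it is wrong. If the outward-facing portion of an inner cell $V(X_i, K\cup L)$ splits into two arcs, no bounded component of the complement is created: the two pieces of the complement abutting those arcs are joined to each other through the unbounded exterior, around an end of the polygon, and never need to ``wrap around the intervening inner cells''. The configuration your argument overlooks is an inner cell that spans the polygon from one side of its boundary to the opposite side (a bowtie shape). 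Then the outward portion of that cell has two components, while the interior of the polygon is still connected --- precisely because one can pass through that very cell --- and the complement is still connected around either end. Neither connectedness requirement of Definition \ref{def:01} is violated, so the contradiction you want never materializes.

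This gap cannot be repaired, because contiguity --- and with it the bound $p+q$ itself --- actually fails. Take $K=\{(-1,0),(0,0),(1,0)\}$ and $L=\{(3,0),(-3,0),(0,3),(0,-3)\}$. Then $V\bigl((0,0),K\cup L\bigr)=[-1/2,1/2]\times[-3/2,3/2]$, whose outward-facing boundary consists of its top and bottom edges, and the body $\{K\leq L\}$ (the union of the three inner Voronoi cells) is the octagon with convex vertices $(\pm 2,\pm 2)$ and concave vertices $(\pm 1/2,\pm 3/2)$. Its interior and complement are both connected, so it is an equidistant polygon of type $(4,3)$ with $8>p+q=7$ vertices; the nearest-inner-point function jumps four times along the boundary although $p=3$, because $(0,0)$ owns two separate arcs. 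This configuration violates $(C1)$--$(C2)$, but each of the eight circumscribed circles is strictly empty, so a generic small perturbation of the seven focal points restores $(C1)$--$(C2)$ while keeping the body an equidistant polygon and, by Corollary \ref{vertices} (and the paper's own footnote that vertices cannot disappear under slight perturbations), keeping at least eight vertices. Note that the paper's proof founders at exactly the point you isolated: its traversal argument asserts that ``exactly one new element in $K\cup L$ appears in each step'', which is precisely your contiguity claim (a focal point, once abandoned along the boundary, never recurs), stated there without proof; in the example above the point $(0,0)$ does recur. So you located the crux correctly, but neither your topological argument nor the paper's bookkeeping establishes it, and the example shows no argument can.
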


\begin{proof} Using a continuity argument it follows that the decreasing of the number of the vertices is impossible by a slight modification of the position of the focal points (the increasing of the number of the vertices is possible, see footnote 2). Therefore we can suppose that $(C1)$ and $(C2)$ are satisfied. Taking the hypergraph representation suppose that it is minimal in the sense that only the focal points belonging to colored edges are considered. This means that we have a finite chain of circles such that the centers form the vertices of the equidistant polygon in a given direction. The adjacent vertices correspond to adjacent circles having a common chord with endpoints $X_{i_m}\in K$ and $Y_{j_m}\in L$. Let us choose a starting vertex/circle. The following algorithm generates a bigraph (see Figure 4) with edges
\begin{itemize}
\item [(i)] $e_1:=X_{i_1} Y_{j_1}$,
\item [(ii)] if $e_m:=X_{i_m} Y_{j_m}$ and $(X_{i_m}, Y_{j_m}, Z)$ determines the adjacent circle with respect to the given direction then
$$e_{m+1}=\left\{
\begin{array}{rl}
X_{i_m} Z &\ \textrm{if } \ Z\in L\\
Y_{j_m} Z &\ \textrm{if } \ Z\in K.
\end{array}
\right.
$$
\end{itemize}
There is a one-to-one correspondence between the edges and the circles. Therefore the number of the edges equals to the number of the circles (the number of the vertices of the equidistant polygon). On the other hand, exactly one new element in $K\cup L$ appears in each step. This means that the number of the circles (the number of the vertices of the equidistant polygon) is less or equal than $p+q$ as was to be proved.
\end{proof}

\begin{remark}
{\emph{We can improve the estimation in case of $p=1$ as follows: for the number of the vertices 
$$|(p,q)|=\left\{
\begin{array}{rl}
q&\ \textrm{if } \ p=1\\
p+q&\ \textrm{otherwise}
\end{array}
\right.
$$
provided that the hypergraph representation is minimal. Indeed, each point in the minimal representation $K\cup L$ appears in the matching process (i) and (ii) and each pair in the matching correspond to a consecutive circle. The edges $e_1$, $\ldots$, $e_m$, $\ldots$ of the bigraph are orthogonal segments to the edges of the equidistant polygon.
}}
\end{remark}

\section{Equidistant polygons of type $(3,2)$ in the plane: the generic case}

Suppose that we have an equidistant polygon of type $(3,2)$ satisfying $(C1)$ and $(C2)$, $K=\{X_1, X_2\}$ and $L=\{Y_1, Y_2, Y_3\}.$ Using Lemma \ref{vertexmax} the maximal number of the vertices is $5$ and it can be attained as we shall see. Let $\omega_{12}^i$, $\omega_{23}^i$ and $\omega_{31}^i$ be the viewing angles under which the segments $Y_1Y_2$, $Y_2Y_3$ and $Y_3Y_1$ are visible from the inner focal point $X_i$ ($i=1, 2$). It is clear that
\begin{equation}
\label{anglesum}
\omega_{31}^i=360^{\circ}-\omega_{12}^i-\omega_{23}^i \quad (i=1, 2).
\end{equation}
We also introduce the viewing angle $\delta^j:=\delta_{12}^j$ under which the segment $X_1X_2$ is visible from $Y_j$  ($j=1, 2, 3$). According to condition $(C2)$ we can suppose that  
\begin{equation}
\label{pentagon}
\omega_{12}^1 > \omega_{12}^2, \ \omega_{23}^{1} < \omega_{23}^2, \ \omega_{31}^1 > \omega_{31}^2
\end{equation}
because $\omega_{23}^{1}> \omega_{23}^2$ implies that $\omega_{31}^1 < \omega_{31}^2$ and the ordering (\ref{pentagon}) follows by changing the role of $Y_1$ and $Y_2$. This means that $\{Y_1, X_1, Y_2\}$, $\{Y_2, X_2, Y_3\}$ and $\{Y_3, X_1, Y_1\}$ are colored edges in the hypergraph representation. They correspond to the convex angles of the equidistant polygon at $V_1$, $V_3$ and $V_5$ (Figure 5). What about the viewing angles $\delta^1$, $\delta^2$ and $\delta^3$? Since condition $(C1)$ is satisfied, the line $X_1X_2$ strictly separates two focal points from the third one, say $Y_3$. Using condition $(C2)$ we can suppose that $\delta^1 < \delta^2$ and, consequently, the centers of the circles (colored edges) $\{X_1, Y_2, X_2\}$ and $\{X_1, Y_3, X_2\}$ are vertices of the equidistant polygon, where concave angles appear at. Therefore we have a pentagon with exactly two concave angles at $V_2$ and $V_4$ (Figure 5). The focal points $X_1$ and $X_2$ are obviously symmetric about the line $V_2 V_4$.

\begin{figure}
\centering
\includegraphics[scale=0.3]{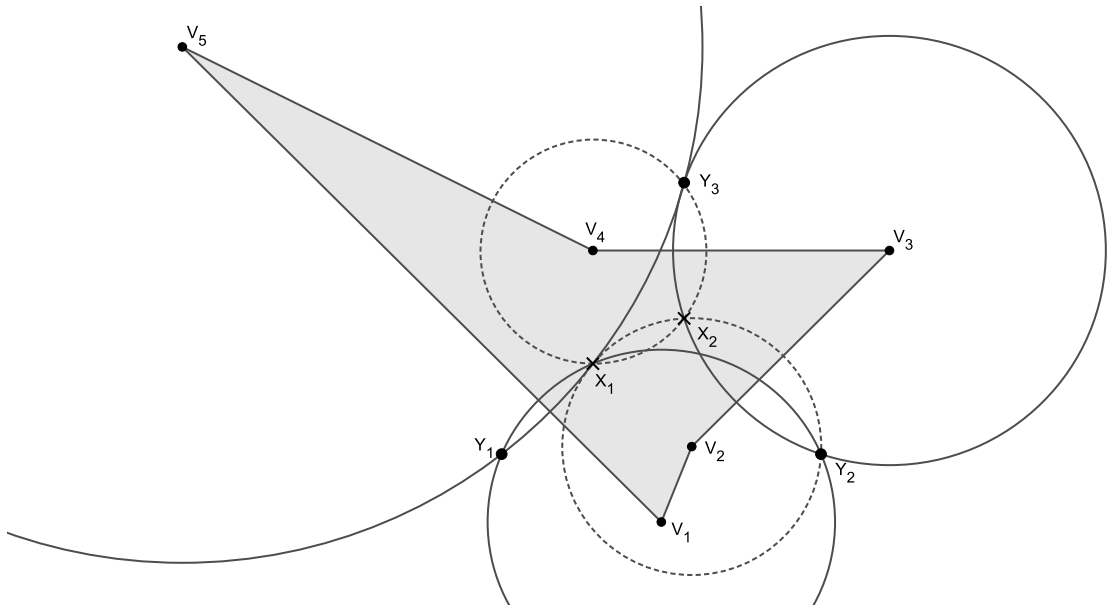}
\caption{An equidistant polygon of type $(3,2)$: the generic case.}
\end{figure}

\begin{lemma}
\label{lem:05}
Consider a simple pentagon $P$ with exactly two concave angles and let the vertices be labelled  by $A$, $B$, $C$, $D$ and $E$ in the counterclockwise direction such that the concave angles are at $B$ and $D$. If the auxiliary lines $f_B$ and $f_D$ are defined by 
$$\rho_{f_B}=\rho_{BA}\circ \rho_{BC}\circ \rho_{BD}, \ \ \rho_{f_D}=\rho_{DE}\circ \rho_{DC}\circ \rho_{DB},$$
where $\rho_{BA}$, $\rho_{BC}$, $\ldots$ denote the reflections about the lines determined by the indices, then $f_B$ and $f_D$ intersect each other on the side of the inner diagonal $BD$ containing $A$.
\end{lemma}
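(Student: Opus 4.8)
The plan is to apply the three reflection theorem twice to identify $f_B$ and $f_D$ as genuine lines, to read off the angles they make with the diagonal $BD$, and then to reduce the assertion to the elementary convergence criterion for two rays issued from the endpoints of $BD$ into the same half-plane.

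First I would record that, since $\rho_{BA}$, $\rho_{BC}$ and $\rho_{BD}$ are reflections in three lines through the common point $B$, their composition $\rho_{f_B}=\rho_{BA}\circ\rho_{BC}\circ\rho_{BD}$ is again a reflection in a single line $f_B$ through $B$ (the three reflection theorem for concurrent lines); likewise $f_D$ is a line through $D$. Denoting by $\varphi_{XY}$ the direction angle of the line $XY$, the composition formula for three concurrent reflections gives the direction of $f_B$ as $\varphi_{BA}-\varphi_{BC}+\varphi_{BD}$, so the directed angle from the line $BD$ to the line $f_B$ equals $\varphi_{BA}-\varphi_{BC}$, i.e. the directed angle between the two sides $BA$ and $BC$ meeting at $B$. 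The point of this step is that this angle is exactly $\angle B-180^{\circ}$, where $\angle B\in(180^{\circ},360^{\circ})$ is the reflex interior angle of the pentagon at $B$; symmetrically $f_D$ makes the angle $\angle D-180^{\circ}$ with the diagonal at $D$.

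The delicate part is the bookkeeping of sides and rays, and this is where I expect the real work. Because $BD$ is an inner diagonal, it splits $P$ into two simple, counterclockwise oriented sub-polygons; the quadrilateral $B\to D\to E\to A\to B$ traverses the diagonal in the direction $B\to D$, so the half-plane $H_A$ bounded by the line $BD$ and containing $A$ (and $E$) is the left half-plane of the directed segment $B\to D$, while $C$ lies in the opposite half-plane. Using this orientation I would verify that the ray of $f_B$ pointing into $H_A$ makes the counterclockwise angle $\mu_B:=\angle B-180^{\circ}\in(0^{\circ},180^{\circ})$ with the ray $BD$, and, symmetrically, that the ray of $f_D$ pointing into $H_A$ makes the clockwise angle $\mu_D:=\angle D-180^{\circ}\in(0^{\circ},180^{\circ})$ with the ray $DB$. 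Both quantities are positive precisely because $B$ and $D$ are the concave vertices, and both rays bend toward the same half-plane $H_A$.

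It then remains to observe that two rays issued from the endpoints $B$ and $D$ of the segment $BD$ into the same half-plane $H_A$, making the base angles $\mu_B$ and $\mu_D$ with that segment, intersect at a point of $H_A$ if and only if $\mu_B+\mu_D<180^{\circ}$ (they are parallel when the sum is $180^{\circ}$ and diverge into the other half-plane when it exceeds $180^{\circ}$). Now the five interior angles of the simple pentagon $P$ sum to $540^{\circ}$, and the angles at the convex vertices $A$, $C$, $E$ are strictly positive, whence $\angle B+\angle D<540^{\circ}$ and therefore
\[
\mu_B+\mu_D=(\angle B-180^{\circ})+(\angle D-180^{\circ})=\angle B+\angle D-360^{\circ}<180^{\circ}.
\]
Consequently $f_B$ and $f_D$ meet at a point lying strictly in $H_A$, i.e. on the side of the inner diagonal $BD$ containing $A$, which is the assertion. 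I expect the only genuine obstacle to be the orientation and ray analysis of the third paragraph; once the directed angle between each auxiliary line and the diagonal is pinned to $\angle B-180^{\circ}$ and $\angle D-180^{\circ}$ and shown to open towards $A$, the conclusion is the single inequality above, powered by the pentagon angle sum.
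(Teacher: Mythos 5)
Your proposal is correct and follows essentially the same route as the paper: identify $f_B$ and $f_D$ via the three reflection theorem, show they meet the diagonal $BD$ at angles $\angle B-\pi$ and $\angle D-\pi$ on the side containing $A$, and conclude from the pentagon angle sum $3\pi$ that these two angles sum to less than $\pi$, forcing the rays to intersect in that half-plane. The only difference is that where the paper reads the angle identification off Figure 6, you derive it analytically from the direction-angle formula $\varphi_{BA}-\varphi_{BC}+\varphi_{BD}$ for a composition of three concurrent reflections, which actually makes the argument more self-contained.
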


\begin{proof} First of all note that $f_B$ and $f_D$ are well-defined due to the three reflection theorem for concurrent lines. The theorem states that the composition of reflections about three concurrent lines is a reflection about a line passing through the common point. Figure 6 shows that the angle between the lines $f_B$ and $BD$ on the side of  the inner diagonal $BD$ containing $A$ is just $\angle B-\pi$, where $\angle B$ is the concave angle of the polygon at $B$. In a similar way, $\angle D-\pi$ is the angle enclosed by $f_D$ and $DB$ on the side of $DB$ containing $A$. Since
$$\angle A+\angle B+\angle C+\angle D+\angle E=3\pi,$$
it follows that
$$(\angle B-\pi)+(\angle D-\pi) = \pi-(\angle A+\angle C+\angle E)<\pi$$
and the intersection point of $f_B$ and $f_D$ exists on the side of the inner diagonal $BD$ containing $A$.
\end{proof}

\begin{figure}
\centering
\includegraphics[scale=0.4]{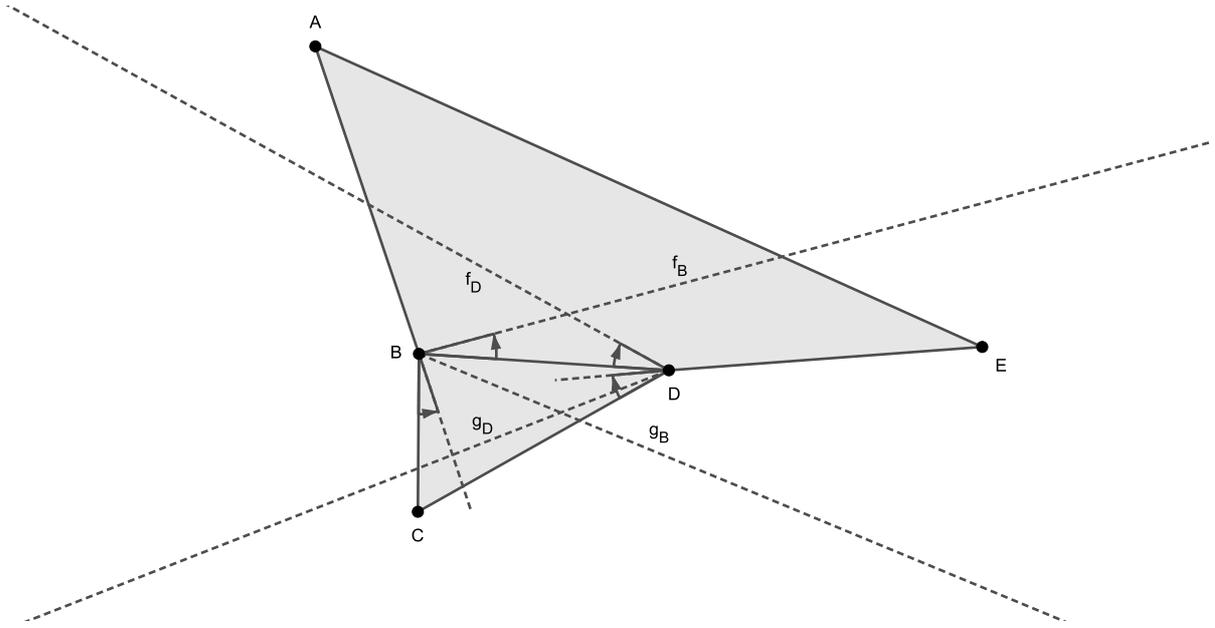}
\caption{The proof of Lemma \ref{lem:05}: pseudo inner focal points.}
\end{figure}

\begin{corollary}
\label{cor:05}
Consider a simple pentagon $P$  with exactly two concave angles and let the vertices be labelled  by $A$, $B$, $C$, $D$ and $E$ in the counterclockwise direction such that the concave angles are at $B$ and $D$. If the auxiliary lines $g_B$ and $g_D$ are defined by 
$$\rho_{g_B}=\rho_{BC}\circ \rho_{BA}\circ \rho_{BD},\ \ \rho_{g_D}=\rho_{DC}\circ \rho_{DE}\circ \rho_{DB},$$
where $\rho_{BC}$, $\rho_{BA}$, $\ldots$ denote the reflections about the lines determined by the indices, then $g_B$ and $g_E$ intersect each other on the side of the inner diagonal $BD$ containing $C$.
\end{corollary}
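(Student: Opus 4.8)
The plan is to deduce the corollary directly from Lemma~\ref{lem:05} by recognizing that the lines $g_B$ and $g_D$ are nothing but the mirror images of $f_B$ and $f_D$ in the inner diagonal $BD$. First I would record that $g_B$ and $g_D$ are well defined for the same reason as $f_B$ and $f_D$: each is a composition of three reflections about lines that are concurrent at $B$ (resp. at $D$), so by the three reflection theorem it is itself a reflection about a line through $B$ (resp. through $D$).

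The heart of the argument is the identity $g_B=\rho_{BD}(f_B)$ (and, symmetrically, $g_D=\rho_{DB}(f_D)$), i.e. $g_B$ is obtained from $f_B$ by reflecting in the line $BD$. To see this I would use that $\rho_{g_B}$ is itself a reflection, hence an involution, so that $\rho_{g_B}=\rho_{g_B}^{-1}=(\rho_{BC}\circ\rho_{BA}\circ\rho_{BD})^{-1}=\rho_{BD}\circ\rho_{BA}\circ\rho_{BC}$. On the other hand, conjugating $\rho_{f_B}=\rho_{BA}\circ\rho_{BC}\circ\rho_{BD}$ by $\rho_{BD}$ and using $\rho_{BD}\circ\rho_{BD}=\mathrm{id}$ gives $\rho_{BD}\circ\rho_{f_B}\circ\rho_{BD}=\rho_{BD}\circ\rho_{BA}\circ\rho_{BC}$. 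Comparing the two, $\rho_{g_B}=\rho_{BD}\circ\rho_{f_B}\circ\rho_{BD}$, and since conjugating the reflection in a line $\ell$ by an isometry $\sigma$ is precisely the reflection in $\sigma(\ell)$, this means exactly $g_B=\rho_{BD}(f_B)$. The same computation at the vertex $D$ yields $g_D=\rho_{DB}(f_D)=\rho_{BD}(f_D)$.

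With this identity in hand the conclusion is immediate. The reflection $\rho_{BD}$ is a bijective isometry fixing the line $BD$ and sending $f_B$ to $g_B$ and $f_D$ to $g_D$; hence it carries the intersection point $f_B\cap f_D$ to the intersection point $g_B\cap g_D$. By Lemma~\ref{lem:05} the former lies strictly on the side of $BD$ containing $A$, and reflection in $BD$ interchanges the two open half planes, so $g_B\cap g_D$ lies on the opposite side. Since $BD$ is an inner diagonal of the simple pentagon it separates $C$ from $A$ and $E$, so that opposite side is precisely the side containing $C$, as claimed (the symbol $g_E$ in the statement being a misprint for $g_D$).

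I would expect the only genuinely delicate point to be the bookkeeping in the key identity: getting the order of the three reflections and the direction of the conjugation right, and justifying the order reversal through the involution property of a reflection rather than through a direct angle computation. Once that step is pinned down, the half-plane interchange and the identification of the $C$-side are routine.
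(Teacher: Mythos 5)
Your proposal is correct and follows essentially the same route as the paper's own proof: the paper likewise establishes that $g_B$ and $g_D$ are the mirror images of $f_B$ and $f_D$ in the line $BD$, using the very same involution/order-reversal trick (carried out pointwise, by showing $\rho_{BD}(G)=\rho_{f_B}(\rho_{BD}(G))$ for every $G\in g_B$, rather than via your operator identity $\rho_{g_B}=\rho_{BD}\circ\rho_{f_B}\circ\rho_{BD}$), and then concludes by Lemma \ref{lem:05}. If anything your write-up is slightly more complete, since the paper leaves the final step (the reflection interchanging the two half-planes bounded by $BD$ and the identification of the $C$-side) implicit, and you correctly observe that $g_E$ in the statement is a misprint for $g_D$.
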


\begin{proof}
Note that $f_B$ and $g_B$ (or $f_D$ and $g_D$) are symmetric about the line $BD$ because (for example) for any $G\in g_B$
$$\rho_{BD}(G)=\rho_{BD}\circ \rho_{g_B} (G)=\rho_{BD}\circ \rho_{BC}\circ \rho_{BA}\circ \rho_{BD} (G)=\rho_{f_B}^{-1}\circ  \rho_{BD} (G)=\rho_{f_B}\circ  \rho_{BD} (G),$$
i.e. $\rho_{BD}(G)\in f_B$ and vice versa.
\end{proof}

\begin{definition} Let $P$ be a simple pentagon  with exactly two concave angles such that the vertices are labelled  by $A$, $B$, $C$, $D$ and $E$ in the counterclockwise direction and the concave angles are at $B$ and $D$. The intersection points $f_B\cap f_D$ and $g_B \cap g_D$ are called the pseudo inner focal points of $P$. 
\end{definition}

\begin{figure}
\centering
\includegraphics[scale=0.4]{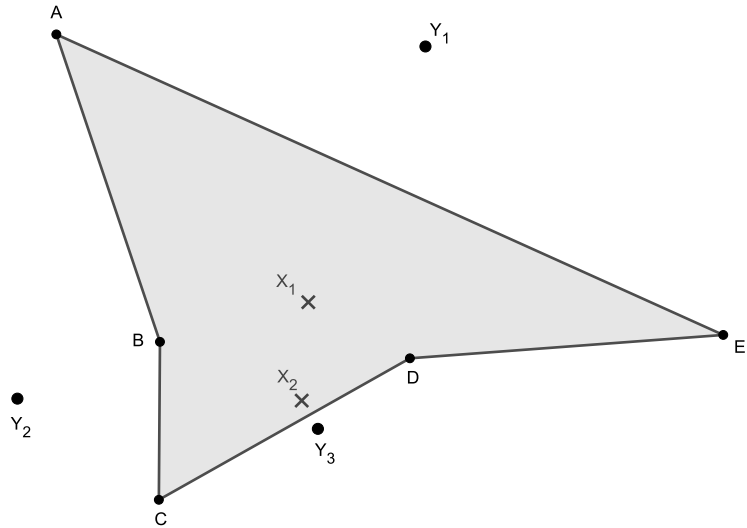}
\caption{The proof of Theorem 3.}
\end{figure}

\begin{theorem}
A simple pentagon is an equidistant polygon of type $(3,2)$ if and only if it has exactly two concave angles such that the vertices, where the concave angles appear at, are joined by an inner diagonal of the polygon and the pseudo inner focal points are in its interior. 
\end{theorem}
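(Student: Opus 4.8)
The plan is to prove both directions at once by establishing a single identity: after labelling the vertices $A=V_1,B=V_2,C=V_3,D=V_4,E=V_5$ (so that the two concave vertices $V_2,V_4$ of the generic analysis become $B,D$ and $C=V_3$ is the lone vertex on one side of $BD$), the two pseudo inner focal points $f_B\cap f_D$ and $g_B\cap g_D$ coincide exactly with the genuine inner focal points $X_1$ and $X_2$. Once this is shown, the theorem becomes a statement about reconstructing the focal sets from the pentagon.

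First I would record the perpendicular-bisector description of the five sides coming from the generic case preceding Lemma \ref{lem:05}: $AB$ is the bisector of $X_1Y_2$, $BC$ of $X_2Y_2$, $CD$ of $X_2Y_3$, $DE$ of $X_1Y_3$, $EA$ of $X_1Y_1$; moreover the inner diagonal $BD=V_2V_4$ joins the two circumcenters of the circles through $\{X_1,Y_2,X_2\}$ and $\{X_1,Y_3,X_2\}$, hence is the perpendicular bisector of their common chord $X_1X_2$. Thus $\rho_{BD}$ swaps $X_1,X_2$; $\rho_{BA}$ swaps $X_1,Y_2$; $\rho_{BC}$ swaps $X_2,Y_2$; and symmetrically at $D$. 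The main computation is then clean: since $B$ is the common circumcenter of $X_1,X_2,Y_2$, these three points lie on a circle about $B$, and $BA,BC,BD$ are the (center-passing) perpendicular bisectors of its chords, so with $\alpha_1,\alpha_2,\beta$ the angular directions of $X_1,X_2,Y_2$ from $B$ they have directions $(\alpha_1+\beta)/2,(\alpha_2+\beta)/2,(\alpha_1+\alpha_2)/2$. By the three reflection theorem the axis of $\rho_{BA}\circ\rho_{BC}\circ\rho_{BD}$ has direction $(\alpha_1+\beta)/2-(\alpha_2+\beta)/2+(\alpha_1+\alpha_2)/2=\alpha_1$, so $f_B$ is the line $BX_1$; the same count at $D$ gives $f_D=DX_1$, whence $f_B\cap f_D=X_1$ and, by Corollary \ref{cor:05}, $g_B\cap g_D=\rho_{BD}(X_1)=X_2$. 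This already settles the forward implication, since $X_1,X_2$ lie in the interior of $\{K\leq L\}$ by Lemma \ref{lem:03}.

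For the converse I would run the computation backwards. Given the pentagon, $f_B,f_D$ pass through $B,D$ by the three reflection theorem, and Corollary \ref{cor:05} forces $X_2:=g_B\cap g_D=\rho_{BD}(X_1)$, so $BD$ is automatically the bisector of $X_1X_2$. Writing out $X_1\in f_B$, i.e. $\rho_{BA}(\rho_{BC}(\rho_{BD}(X_1)))=X_1$, substituting $\rho_{BD}(X_1)=X_2$ and cancelling $\rho_{BA}$ gives $\rho_{BC}(X_2)=\rho_{BA}(X_1)$, so the two candidate definitions of $Y_2$ agree; likewise $X_1\in f_D$ yields $\rho_{DC}(X_2)=\rho_{DE}(X_1)$, making $Y_3$ well-defined. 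Setting $Y_1:=\rho_{EA}(X_1)$, $K=\{X_1,X_2\}$ and $L=\{Y_1,Y_2,Y_3\}$, each side of $P$ then lies on the perpendicular bisector of an inner–outer pair and each vertex is the circumcenter of its triple, hence equidistant from $K$ and $L$.

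It remains to check $P=\{K=L\}$, and this is where I expect the real difficulty, the reflection algebra being otherwise straightforward. One must verify that along every side the selected focal points are the \emph{nearest} members of $K$ and of $L$ (so that local equidistance is the global equation $d(\cdot,K)=d(\cdot,L)$), that $K\subset\textrm{int\ conv\ }L$ so that $\{K\leq L\}$ is a bounded equidistant polytope by Lemma \ref{lem:01}, and that its interior and complement are both connected. The hypothesis that \emph{both} pseudo inner focal points lie in the interior of $P$ is precisely what drives these verifications: it places $X_1,X_2$ strictly inside $P$ on opposite sides of $BD$, forces the circumradii at the vertices to realise the minimal distances, and — together with the simplicity of $P$ and the genericity conditions $(C1),(C2)$ — rules out spurious equidistant points, so that $\{K=L\}$ is exactly the simple closed chain bounding $P$. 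The core obstacle is therefore the global distance bookkeeping showing that the reconstructed body $\{K\leq L\}$ carries no extra boundary arcs and recovers $P$ precisely.
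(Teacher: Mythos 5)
Your proposal is correct in its core and, on the sufficiency direction, is essentially the paper's own proof: the paper's argument consists precisely of setting $X_1:=f_B\cap f_D$, $X_2:=g_B\cap g_D$ and defining $Y_1=\rho_{AE}(X_1)$, $Y_2=\rho_{AB}(X_1)=\rho_{BC}(X_2)$, $Y_3=\rho_{CD}(X_2)=\rho_{DE}(X_1)$, with the symmetry $X_2=\rho_{BD}(X_1)$ taken from Corollary \ref{cor:05}. What you add is genuinely more than the paper gives. First, you derive the consistency identities $\rho_{BC}(X_2)=\rho_{BA}(X_1)$ and $\rho_{DC}(X_2)=\rho_{DE}(X_1)$ from $X_1\in f_B\cap f_D$, where the paper merely asserts them. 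Second, and more importantly, your angular computation (chord bisectors through the circumcenter $B$ have directions $(\alpha_1+\beta)/2$, $(\alpha_2+\beta)/2$, $(\alpha_1+\alpha_2)/2$, composing to a reflection with axis direction $\alpha_1$) proves that $f_B=BX_1$, $f_D=DX_1$, hence that the pseudo inner focal points of a genuine equidistant pentagon \emph{are} its inner focal points; combined with Lemma \ref{lem:03} this is the only rigorous argument on the table for the necessity of the interiority condition, which the paper nowhere establishes (its proof treats only the ``if'' direction, leaning on the generic-case discussion for the rest). The gap you flag at the end --- verifying that the reconstructed $\{K\leq L\}$ has boundary exactly $\partial P$, i.e.\ that the designated focal points are nearest points along each side and no spurious equidistant arcs occur --- is real, but it is shared by the paper, whose proof stops at the construction with ``see Figure 7''; so your write-up is at least as complete as the published one and strictly more informative on the ``only if'' half.
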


\begin{proof}
Suppose that $P$ is a simple pentagon satisfying the conditions of the statement. Using the notations in Figure 6
$$X_1:=f_B\cap f_D, \ X_2=g_B\cap g_D;$$
they are symmetric about the line $BD$ (see the proof of Corollary \ref{cor:05}). The outer focal points are 
$$Y_1=\rho_{AE}(X_1), \ Y_2=\rho_{AB}(X_1)=\rho_{BC}(X_2), \ Y_3=\rho_{CD}(X_2)=\rho_{DE}(X_1);$$
see Figure 7. 
\end{proof}

\section{Equidistant polygons of type $(3,2)$ in the plane: special arrangements of the focal points}

\subsection{The case of concircular points} In this case we have points in $K\cup L$, say $X_1$, $Y_1$, $X_2$ and $Y_2$ lying on the same circle. Especially, $\omega_{12}^1=\omega_{12}^2.$ Since the interior of the convex hull of $L$ contains the points in $K$, all the focal points can not be on the same circle. Without loss of generality we can suppose (by renumbering the inner focal points if necessary) that $\omega_{23}^1 < \omega^{2}_{23}$ as Figure 8 shows. Therefore $\omega_{31}^1 > \omega_{31}^2$ and there are convex angles at $V_2$ and $V_4$. Another convex angle is at the vertex $V_1$ due to the simultaneous change of the outer and the inner focal points. Since $X_1$, $Y_1$, $X_2$ and $Y_2$ are lying on the same circle, the viewing angles $\delta_1$ and $\delta_2$ are equal to each other and the secant line $X_1X_2$ strictly separates $Y_1$ and $Y_2$ from $Y_3$. This means that the center $V_3$ of the circle passing through the points $X_1$, $X_2$ and $Y_3$ is a vertex of the equidistant polygon, where a concave angle appears at. 

\begin{figure}
\centering
\includegraphics[scale=0.45]{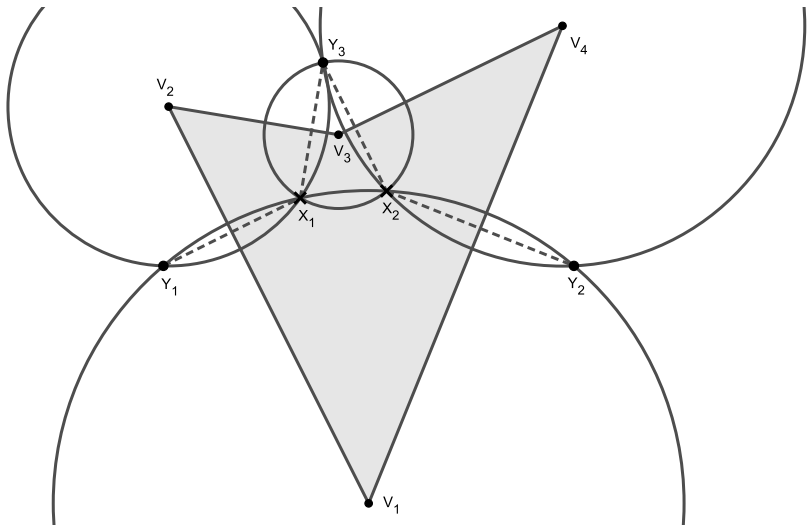}
\caption{The case of concircular points.}
\end{figure}

\begin{theorem}
\label{thm:01} A simple concave quadrangle is an equidistant polygon of type $(3,2)$.
\end{theorem}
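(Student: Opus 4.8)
The plan is to invert the concircular construction recorded just above: given an arbitrary simple concave quadrangle I will manufacture two inner and three outer focal points by reflecting a single seed point across the four sides, so that each side becomes the perpendicular bisector of an inner–outer pair, and then show that the lone degeneracy built into the construction — the concircularity forcing a double change — makes the equidistant body collapse exactly onto the given quadrangle.

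First I would fix the combinatorial labelling. Label the vertices $V_1,V_2,V_3,V_4$ counterclockwise with the unique reflex angle at $V_3$; then the interior diagonal is $V_1V_3$ and $V_1$ is the vertex opposite the concave one. Arguing as in Corollary \ref{vertices}, each side lies on the perpendicular bisector of one inner and one outer focal point, and tracking how the nearest inner and nearest outer point change from side to side forces, up to renaming, the assignment
$$V_1V_2 \leftrightarrow (X_1,Y_1),\quad V_2V_3 \leftrightarrow (X_1,Y_3),\quad V_3V_4\leftrightarrow (X_2,Y_3),\quad V_4V_1\leftrightarrow (X_2,Y_2).$$
Here the inner focal point changes only at $V_3$ and $V_1$; since an inner change at a convex vertex can occur only as a double change, $V_1$ must be the double-change vertex, which is precisely the concircular situation of Figure 8.

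Next I would realise this assignment explicitly. Choosing a seed $Y_3$, set
$$X_1:=\rho_{V_2V_3}(Y_3),\ X_2:=\rho_{V_3V_4}(Y_3),\ Y_1:=\rho_{V_1V_2}(X_1),\ Y_2:=\rho_{V_4V_1}(X_2).$$
By construction every side is the perpendicular bisector of its pair, so each of $V_1,\dots,V_4$ is automatically the circumcentre of the corresponding triple of focal points, and all equalities needed for them to be vertices hold for every position of $Y_3$. The only extra requirement is that $V_1$ be a genuine double change, i.e. $|V_1X_1|=|V_1X_2|$. Since $X_2=\rho_{V_3V_4}\rho_{V_2V_3}(X_1)$ is the image of $X_1$ under the rotation about $V_3$, this single equality confines $Y_3$ (equivalently $X_1$) to a line through $V_3$; along it $|V_3X_1|=|V_3Y_3|=|V_3X_2|$ and $|V_1X_1|=|V_1X_2|$ force the diagonal $V_1V_3$ to be the perpendicular bisector of $X_1X_2$, so $X_1$ and $X_2$ are symmetric in $V_1V_3$, exactly as they must be.

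Finally I would check that for a suitable point of that one-parameter family the equidistant body $\{K\le L\}$ with $K=\{X_1,X_2\}$, $L=\{Y_1,Y_2,Y_3\}$ is exactly the given quadrangle. Everything that remains is a finite list of open, ``nearest point'' inequalities — that $Y_3$ be farther from $V_1$ than $X_1$ is, that $Y_1,Y_2$ lie outside the circle about $V_3$ through $Y_3$, the analogous inequalities at $V_2$ and $V_4$, and $K\subset \operatorname{int}\operatorname{conv}L$. The hard part will be precisely this step: proving that the family actually contains a configuration for which no spurious active bisector appears, so that the four prescribed sides are the only boundary arcs. The cleanest way I see to settle it is a continuity argument anchored in the generic case: perturbing $Y_3$ off the concircularity line splits the double change into two vertices joined by a short fifth edge, producing a type $(3,2)$ pentagon governed by the generic characterisation, and this edge shrinks to zero exactly in the concircular limit. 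Since the four side-lines are fixed along the whole family, letting $|V_3Y_3|$ grow in the interior-diagonal direction drives $X_1,X_2$ onto the spine of the dart and $Y_1,Y_2,Y_3$ well outside it; for all sufficiently large values the inequalities hold, and the degenerating pentagons converge to the quadrangle $V_1V_2V_3V_4$.
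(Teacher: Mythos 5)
Your set-up and your construction are, after relabelling, exactly the paper's. The locus you derive for the seed point (the line through $V_3$ on which $|V_1X_1|=|V_1X_2|$ holds) is precisely the paper's auxiliary line $f$ defined through the three reflection theorem by $\rho_f=\rho_{CD}\circ\rho_{CB}\circ\rho_{CA}$ (with $A=V_1$, $B=V_2$, $C=V_3$, $D=V_4$), and your five focal points coincide with the paper's $X_1\in f$, $X_2=\rho_{CA}(X_1)$, $Y_3=\rho_{CD}(X_1)=\rho_{CB}(X_2)$, $Y_1=\rho_{AD}(X_1)$, $Y_2=\rho_{AB}(X_2)$: each side of the quadrangle is the perpendicular bisector of its inner--outer pair, and $X_1$, $X_2$ are symmetric about the inner diagonal. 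Your combinatorial preamble (inner change only at the reflex vertex and at the opposite vertex, the latter forced to be a double change) also agrees with the paper's discussion of the concircular case.

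The genuine gap is in your final selection step, and it is an error of direction, not merely an omission. You claim that letting $|V_3Y_3|$ grow ``drives $X_1,X_2$ onto the spine of the dart'' and that the required inequalities hold ``for all sufficiently large values''. The opposite happens: the constraint line makes the fixed angle $\angle V_3-\pi>0$ with the diagonal $V_1V_3$ (this is the quadrangle analogue of the angle computation in Lemma \ref{lem:05}), so the distance from $X_1$ to the diagonal grows linearly with $|V_3X_1|=|V_3Y_3|$, and $X_1$, $X_2$ move away from the diagonal and eventually leave the quadrangle. Since the inner focal points are automatically interior points of $\{K\leq L\}$ (Lemma \ref{lem:03}, used in the proof of Corollary \ref{vertices}), and $\{K\leq L\}$ is supposed to be the quadrangle, the parameter $|V_3Y_3|$ is bounded above: there is no ``sufficiently large'' regime at all. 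The workable choice --- the paper's --- is the opposite limit: take $X_1$ on $f$ close to the reflex vertex $V_3$, where the reflex angle guarantees that both $f$ and its mirror image $g=\rho_{V_1V_3}(f)$ enter the interior of the polygon, so that $X_1$ and $X_2=\rho_{V_1V_3}(X_1)$ both lie inside. Your fallback argument via the generic case has a logical direction problem as well: the pentagon characterisation starts from a given pentagon and manufactures focal points for it; it does not assert that a slightly perturbed focal configuration has a pentagon as its equidistant set. To use it you would have to approximate the quadrangle by admissible pentagons, apply the characterisation to each, and then prove that the resulting focal sets converge to your concircular configuration before invoking the Hausdorff continuity theorem --- none of which is carried out, so the degenerating pentagons you appeal to are not available.
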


\begin{proof}
Let the vertices of a simple concave quadrangle in the plane be labelled by $A$, $B$, $C$ and $D$ in the counterclockwise direction and suppose that the concave angle is at the vertex $C$. Let us introduce the auxiliary line $f$ passing through the vertex $C$ such that
$$\rho_f=\rho_{CD}\circ \rho_{CB}\circ \rho_{CA},$$
where $\rho_{CD}$, $\rho_{CB}$, $\ldots$ denote the reflections about the lines determined by the indices. It is well-defined due to the three reflection theorem for concurrent lines. Since $f$ passes through the vertex of the concave angle, it must contain points such that they are in the interior of the polygon together with their reflected pairs about the inner diagonal line (see Figure 9). Taking such a point $X_1$ we define
$$X_2:=\rho_{CA}(X_1).$$ 
According to the construction, $\rho_{CD}(X_1)=\rho_{CB}(X_2)=Y_3.$ Finally we complete the set of the outer focal points by $\rho_{AD}(X_1)=Y_1$ and $\rho_{AB}(X_2)=Y_2$.
\end{proof}

\begin{figure}
\centering
\includegraphics[scale=0.45]{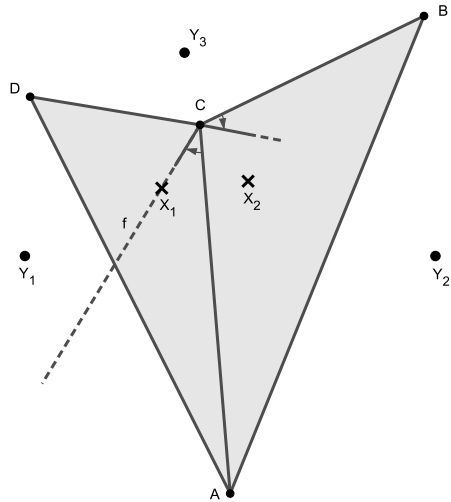}
\caption{A simple concave quadrangle as an equidistant polygon of type $(3,2)$: Theorem \ref{thm:01}.}
\end{figure}

In the proof of the previous theorem we can also consider the auxiliary line $g$ determined by 
$$\rho_g=\rho_{CB}\circ \rho_{CD}\circ \rho_{CA}$$
instead of $f$. The lines $g$ and $f$ are symmetric about the inner diagonal line $AC$ because for any $G\in g$
$$\rho_{CA}(G)=\rho_{CA}\circ \rho_g (G)=\rho_{CA}\circ \rho_{CB}\circ \rho_{CD}\circ \rho_{CA}(G)=\rho_f^{-1}\circ \rho_{CA}(G)=\rho_f\circ \rho_{CA}(G),$$
i.e. $\rho_{CA}(G)\in f$ and vice versa. Since the inner focal points must be choosen symmetrically about the inner diagonal line, the role of these lines is also symmetric in the argumentation. Indeed, $X_1 X_2$ is a common chord of the circles around $V_1$ and $V_3$ (Figure 8).

\begin{corollary} Any simple quadrangle is an equidistant polygon.
\end{corollary}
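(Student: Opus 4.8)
The plan is to split the argument according to the two possible shapes of a simple quadrangle. Since the interior angles of a simple quadrangle sum to $2\pi$ and a reflex (concave) angle exceeds $\pi$, at most one of the four angles can be concave; hence a simple quadrangle is either convex or has exactly one concave angle. The concave case is already settled, since Theorem \ref{thm:01} states that a simple concave quadrangle is an equidistant polygon of type $(3,2)$. Thus only the convex case remains.

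For a convex quadrangle $P$ I would realize it as an equidistant polygon of type $(4,1)$, i.e.\ with a single inner focal point. Let $\ell_1,\ldots,\ell_4$ be the lines carrying the four edges of $P$ and pick any interior point $X$. Define the outer focal points as the mirror images $Y_k:=\rho_{\ell_k}(X)$ $(k=1,\ldots,4)$. Since $\ell_k$ is exactly the perpendicular bisector of the segment $XY_k$ and $X$ lies on the interior side of $\ell_k$, the set $\{X\leq Y_k\}$ is precisely the closed half-plane bounded by $\ell_k$ that contains $X$. The convex quadrangle $P$ is the intersection of these four half-planes, so the halfspace intersection formula (\ref{halfspaceinter}) yields
\[
\{X\leq L\}=\bigcap_{k=1}^4 \{X\leq Y_k\}=P,
\]
where $L=\{Y_1,Y_2,Y_3,Y_4\}$.

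It then remains to verify that this realization meets Definition \ref{def:01}. The body $\{X\leq L\}=P$ is bounded, so Lemma \ref{lem:01} forces the inner focal point $X$ to lie in the interior of the convex hull of $L$. Moreover $X$ is interior to $P$ while each $Y_k$ is exterior, so $K=\{X\}$ and $L$ are disjoint; the four points $Y_k$ are automatically pairwise distinct, since $Y_i=Y_j$ with $i\neq j$ would make $X$ a fixed point of $\rho_{\ell_j}\circ\rho_{\ell_i}$, hence the intersection of $\ell_i$ and $\ell_j$, which is a vertex and not an interior point. Finally, $P$ is convex, so its interior is connected, and the complement of a compact convex set in the plane is connected as well. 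Hence $P$ is an equidistant polygon of type $(4,1)$, and together with the concave case this proves that every simple quadrangle is an equidistant polygon.

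I expect the only delicate point to be the identification of $\{X\leq L\}$ with $P$ exactly on the nose, together with the worry that the reflected points might fail to surround $X$ and a spurious half-plane might truncate $P$. The plan circumvents this entirely: the halfspace intersection formula pins down $\{X\leq L\}$ as the prescribed intersection of four half-planes, and boundedness is read \emph{backwards} through Lemma \ref{lem:01} to secure $X\in\textrm{int\ conv\ }L$, rather than establishing this containment by a direct computation. The remaining connectivity requirements are automatic for convex bodies, so no substantial obstacle survives.
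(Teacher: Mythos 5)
Your proposal is correct and takes essentially the same route as the paper: the same case split, with the concave case delegated to Theorem \ref{thm:01} and the convex case realized as an equidistant polygon of type $(4,1)$. The only difference is that where the paper simply cites \cite{plane} for the convex case, you prove it directly (reflecting an interior point across the four edge lines and invoking the halfspace intersection formula and Lemma \ref{lem:01}), which makes the argument self-contained.
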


\begin{proof} Recall that convex quadrangles are equidistant polygons of type $(4,1)$; see \cite{plane}. Otherwise we  can refer to Theorem \ref{thm:01}. 
\end{proof}

\subsection{The case of collinear points} Suppose that one of the outer focal points, say $Y_1$, is collinear with $X_1$ and $X_2$. Since the inner focal points must be in the interior of the convex hull of the outer focal points, $Y_2$ and $Y_3$ must be strictly separated by the line $X_1X_2$ and, consequently, the centers of the circles $\{X_1, X_2, Y_2\}$ and $\{X_1, X_2, Y_3\}$ are vertices of the equidistant polygon, where concave angles appear at. It is the same situation as in the generic case.

\subsection{Summary} We have proved that an equidistant polygon of type $(3,2)$ in the plane belongs to one of the following classes:
\begin{itemize}
\item simple concave quadrangles (four concircular focal points, the focal sets form one-parameter families as the point $X_1$ is moving along the auxiliary line $f$),
\item simple pentagons with exactly two concave angles such that the vertices, where the concave angles appear at, are joined by an inner diagonal and the pseudo inner focal points are in the interior of the pentagon. The pseudo inner focal points are constructed by the intersections of the lines substituting the adjacent sides and the inner diagonal at the vertices, where the concave angles appear at, via the three reflection theorem for concurrent lines (the focal sets are uniquely determined).
\end{itemize}

\end{document}